\documentclass[reqno,11pt]{amsart}

\usepackage{times,amsmath,cancel,stmaryrd,graphicx}
\usepackage{amsfonts,enumitem,amssymb,color,mathrsfs}
\usepackage[colorlinks=true]{hyperref}
\usepackage{cleveref}
\usepackage{lmodern}
\usepackage[dvipsnames]{xcolor}
\usepackage{comment}
\usepackage{tikz}
\usetikzlibrary{arrows.meta,calc}
\usepackage{cite}
\usepackage{float}

\usepackage{mathtools}   
\usepackage{dsfont}      
\usepackage{booktabs}    
\graphicspath{{Images/}{../Images/}}

\newtheorem{thm}{Theorem}[section]
\newtheorem{lem}[thm]{Lemma}

\newtheorem{prop}[thm]{Proposition}

\newtheorem{rem}[thm]{Remark}

\numberwithin{equation}{section}
\newcommand{\R}{\mathbb{R}}

\newcommand{\N}{\mathbb{N}}
\newcommand{\rd}{\mathrm{d}}
\newcommand{\sign}{\mathrm{sign}}

\allowdisplaybreaks

\begin{document}

\title[Coral--Macroalgae Dynamics under Crowding]{Dynamics of Coral--Macroalgae Interactions under Crowding}
\author{Julie C. Blackwood}
\address{Department of Mathematics\\
Williams College\\
180 Main Street\\
Williamstown, MA 01267\\
USA}
\email{jcb5@williams.edu}

\author{Katerina Nik}
\address{King Abdullah University of Science and Technology (KAUST)\\
CEMSE Division\\
Thuwal 23955-6900\\
Saudi Arabia}
\email{katerina.nik@kaust.edu.sa}

\author{Simon Nik}
\address{King Abdullah University of Science and Technology (KAUST)\\
CEMSE Division\\
Thuwal 23955-6900\\
Saudi Arabia}
\email{simon.nik@kaust.edu.sa}

%

\begin{abstract}
We study a planar ODE model for the benthic competition between coral, macroalgae, and algal turf on a reef, extending the classical model of Mumby, Hastings, and Edwards by a nonlinear, density-dependent coral mortality that accounts for crowding. The strength of crowding is set by an exponent $\delta>0$ that reshapes the coral nullcline and enriches the bifurcation structure of the system. We establish positive invariance of the biologically relevant region and the absence of periodic orbits, classify the three boundary equilibria together with their local stability, and reduce the coexistence problem to a single scalar equation whose shape, in particular its concavity, controls the number and local stability of the interior equilibria. The grazing intensity $g$ organizes the dynamics through two thresholds $g_0<g_1$ determining the stability of the coral- and macroalgae-dominated states, and a further threshold $g^\star$ at which two interior equilibria collide. We prove that the system undergoes transcritical bifurcations at the boundary equilibria and a saddle-node bifurcation of interior equilibria, and we discuss the implications for coral reef resilience and hysteresis. We complement these results with numerical simulations that illustrate the bifurcation sequence across the grazing regimes.
\end{abstract}
%
\keywords{Coral reef ecosystems, crowding, equilibria, local stability, bifurcation analysis, multistability}
\subjclass[2020]{92D40, 92D25, 34C23, 34D20, 37G10}
 %

\maketitle

\section{Introduction}\label{sec:intro}

Coral reefs are among the most productive and diverse marine ecosystems, providing essential services that range from coastal protection and fisheries to tourism~\cite{MoFo99}. Across the globe, however, these ecosystems are increasingly threatened by stressors that are often exacerbated by anthropogenic activity: rising sea surface temperatures induce coral bleaching, while the over-harvesting of herbivorous reef fish promotes algal overgrowth that outcompetes and ultimately damages the coral~\cite{Ban13,Harborne17,Hughes99}.

There is both empirical and theoretical evidence that coral reef ecosystems admit two alternative stable states: a desirable state of high coral cover and a degraded, coral-depleted state dominated by macroalgae~\cite{Bell04,Hugh94,Know92,Mum07}. A well-documented illustration occurred in the Caribbean in the 1980s, when a disease-induced mass mortality of the sea urchin {\it Diadema antillarum}, until then the principal grazer, was followed by a marked decline of coral and a proliferation of algae~\cite{Lessios84}. In the absence of urchins, herbivorous reef fish such as parrotfish became the primary grazers, their scraping of algae from hard substrates helping to curb macroalgal overgrowth.

Maintaining the desirable state of high coral cover requires managing several interacting stressors simultaneously. Phase shifts between the two states may occur abruptly as exogenous conditions, such as the grazing rate, vary, and reversing them may then require improving these conditions well beyond the threshold at which the collapse occurred. This hysteresis underscores the importance of sustaining coral resilience~\cite{Black12,Fun11}. Alongside the management of grazing pressure, considerable effort is therefore devoted to active restoration, such as coral transplantation, coral gardening, and the deployment of artificial reefs~\cite{Bayraktarov19}. Mathematical models of coral--macroalgae competition, in turn, have become a central tool for understanding the transitions between these states and the conditions under which a degraded reef may recover~\cite{Black12,Mum07,ZhYu22}.

In the present work we adapt the well-established Mumby--Hastings--Edwards model~\cite{Mum07}, which captures the central feedbacks between coral and algae competing for space on the reef, by incorporating the nonlinear effects of {\it coral crowding}: as coral cover increases, intraspecific competition raises the coral mortality rate. We model this density dependence through a mortality of the form $m(C)=m_0+dC^{\delta}$, where $m_0>0$ is the baseline mortality and the exponent $\delta>0$ tunes how steeply mortality grows with coral cover. Although such density dependence is ecologically well documented~\cite{Hughes84}, its consequences for the qualitative dynamics of coral--macroalgae models have received comparatively little rigorous attention. When $d=0$ the model reduces to the constant-mortality system of~\cite{Mum07,Li14}, whose coral nullcline is a straight line. The crowding term reshapes this nullcline for every $\delta>0$. As we show below, intermediate crowding $1<\delta<2$ renders the reduced equation non-concave and can produce three coexistence equilibria and a tristable reef, a feature the constant-mortality model cannot exhibit~\cite{Black12,Li14,Mum07,ZhYu22}. Ecologically, such a tristable reef can settle into coral dominance, macroalgae dominance, or a mixed coral--macroalgae community, with the initial cover deciding the outcome.

The aim of the present work is to provide a rigorous qualitative analysis of the resulting planar system, with particular emphasis on how the crowding exponent $\delta$ reshapes the equilibrium and bifurcation structure. We determine all boundary and interior equilibria, characterize their local stability, identify the grazing thresholds at which the reef transitions from macroalgae dominance through bistability to coral dominance, and complement this analysis with numerical simulations. 
In particular, we reduce the coexistence problem to a single scalar equation whose concavity controls the number and local stability of the interior equilibria. The non-concave regime $1<\delta<2$, in which this reduction admits as many as three coexistence states, requires a more delicate analysis. We refer to~\cite{Fun11,Mum07,TanLanWei24} for the ecological background and for the parameter ranges underlying the model.\\

The article is organized as follows. In Section~\ref{sec:model} we formulate the model, reduce it to a planar system on a positively invariant region~$\Omega$, and show the absence of periodic orbits (Lemma~\ref{lem:dulac}). In Section~\ref{sec:boundary} we determine the three boundary equilibria and characterize their local stability in terms of two grazing thresholds $g_0<g_1$ (Lemma~\ref{lem:stabO} and Theorems~\ref{thm:stabB} and~\ref{thm:stabC}). In Section~\ref{sec:interior} we reduce the coexistence problem to a scalar equation $L(y)=0$, relate the local stability of an interior equilibrium to the sign of $L'(y)$, and give a complete classification of the dynamics across the low-, intermediate-, and high-grazing regimes, the last controlled by a further threshold~$g^\star$ (Theorem~\ref{thm:classification}). In Section~\ref{sec:bif} we rigorously establish the transcritical bifurcations at the boundary equilibria $B$ and $C$ (Theorems~\ref{thm:transcritB} and~\ref{thm:transcritC}) and the saddle-node bifurcation of interior equilibria (Theorem~\ref{thm:saddlenode}), deferring to Appendix~\ref{app:proofs} the accompanying normal-form reductions, which identify the local type of each degenerate equilibrium at its threshold as a saddle-node point. In Section~\ref{sec:exnum} we specialize the analysis to the linear, quadratic, and intermediate crowding exponents and illustrate the resulting dynamics, including the full bifurcation sequence, numerically. Section~\ref{sec:conclusion} discusses the ecological implications for coral reef resilience.


\bigskip

\section{The Coral Reef Model with Crowding}\label{sec:model}

We consider a benthic reef community composed of three constituents that compete for space: coral~$C$, macroalgae~$M$, and algal turf~$T$, with their interactions depicted in Figure~\ref{fig:schematic}. Each variable denotes the fraction of the seabed occupied by the respective constituent, and the seabed is assumed to be fully covered, so that
\begin{equation}\label{cover}
C+M+T=1\,.
\end{equation}
Building on the classical model of~\cite{Mum07}, in which coral and algae compete through overgrowth, grazing, and natural turnover, we incorporate a density-dependent coral mortality that accounts for crowding. The resulting dynamics are described by the system of three coupled nonlinear ordinary differential equations (ODEs)
\begin{subequations}\label{CMT}
\begin{align}
\dot M &= a MC + \gamma MT - \frac{g M}{M+T} - n M\,, \label{CMT1}\\
\dot C &= bCT - aMC - \big(m_0 + d C^{\delta}\big)C\,, \label{CMT2}\\
\dot T &= \frac{g M}{M+T} + n M + \big(m_0 + dC^{\delta}\big)C - b CT - \gamma MT\,, \label{CMT3}
\end{align}
\end{subequations}
where the dot denotes differentiation with respect to time~$t$.

\begin{figure}[H]
\centering
\begin{tikzpicture}[
  >=Latex,
  comp/.style={rectangle, rounded corners=6pt, draw=black, thick,
               minimum width=2.5cm, minimum height=1.15cm, align=center, font=\small},
  flow/.style={->, thick},
  rate/.style={font=\small, fill=white, inner sep=1.5pt}
]
\node[comp, fill=blue!20]   (C) at (0,2.7)     {coral\\[1pt]$C$};
\node[comp, fill=orange!20] (M) at (-3.7,-1.5) {macroalgae\\[1pt]$M$};
\node[comp, fill=green!18] (T) at (3.7,-1.5)  {turf\\[1pt]$T$};
\draw[flow] (C) to[bend right=8] node[rate,above left]{$a$} (M);
\draw[flow] (T) to[bend left=18] node[rate,auto,swap]{$b$} (C);
\draw[flow] (C) to[bend left=18] node[rate,auto]{$m(C)$} (T);
\draw[flow] (M) to[bend left=12] node[rate,above]{$g,\,n$} (T);
\draw[flow] (T) to[bend left=12] node[rate,below]{$\gamma$} (M);
\end{tikzpicture}
\caption{Schematic of the benthic reef model~\eqref{CMT}. Coral~$C$, macroalgae~$M$, and algal turf~$T$ compete for the available space, with $C+M+T=1$. Macroalgae overgrow coral at rate~$a$, while coral overgrows turf at rate~$b$ and dies, including through crowding, at the rate $m(C)=m_0+dC^{\delta}$. Macroalgae become turf through grazing at rate~$g$ and natural death at rate~$n$, and they spread vegetatively over turf at rate~$\gamma$.}
\label{fig:schematic}
\end{figure}

The terms have the following ecological meaning, the first five being those of~\cite{Mum07}. Macroalgae overgrow coral at rate~$a$, while grazers feed indiscriminately on macroalgae and turf at rate~$g$, so that grazed macroalgae are converted into turf. Macroalgae spread vegetatively over turf at rate~$\gamma$ and die naturally at rate~$n$, the latter producing turf, whereas coral overgrows turf at rate~$b$. The novel ingredient is the coral mortality
\begin{equation}\label{mortality}
m(C)=m_0+dC^{\delta}\,,
\end{equation}
in which corals die at a baseline rate $m_0>0$, while the term $d C^{\delta}$ models the strengthening of intraspecific (coral--coral) competition as coral cover increases, the exponent $\delta>0$ controlling how steeply the mortality grows with cover. As with the other loss terms, dead coral reverts to turf.

To remain consistent with the calibrated ranges of~\cite{Fun11}, the mortality~\eqref{mortality} is required to satisfy
\[
0.02\le m(C)\le 0.30\,,\quad C\in[0,1]\,.
\]
Since $C\mapsto m(C)$ is increasing, with $\min_{[0,1]}m=m(0)=m_0$ and $\max_{[0,1]}m=m(1)=m_0+d$, this holds if and only if
\[
m_0\ge 0.02\quad\text{and}\quad m_0+d\le 0.30\,.
\]
Representative parameter ranges and their sources are collected in Table~\ref{tab:params}.

\begin{table}[H]
\centering
\small
\setlength{\tabcolsep}{5pt}
\renewcommand{\arraystretch}{1.2}
\begin{tabular}{clccl}
\toprule
\textbf{Parameter} & \textbf{Description} & \textbf{Range} & \textbf{Unit} & \textbf{Source}\\
\midrule
$a$      & rate macroalgae overgrow coral           & $0$--$1.44$        & yr$^{-1}$ & \cite{Fun11}\\
$g$      & maximum grazing rate on algae            & $0.05$--$15$       & yr$^{-1}$ & \cite{Fun11}\\
$\gamma$ & rate macroalgae spread over turf         & $0$--$0.9$         & yr$^{-1}$ & \cite{Fun11}\\
$n$      & natural mortality rate of macroalgae     & $0$--$0.1$         & yr$^{-1}$ & \cite{ZhYu22}\\
$b$      & rate coral overgrows turf                & $0$--$1$           & yr$^{-1}$ & \cite{Fun11}\\
$m_0$    & density-independent coral mortality      & $0.02$--$0.30$     & yr$^{-1}$ & \cite{Fun11}\\
$d$      & crowding (density-dependent) coefficient & $0$--$(0.30-m_0)$  & yr$^{-1}$ & this work\\
$\delta$ & crowding exponent                        & $>0$               & ---       & this work\\
\bottomrule
\end{tabular}
\caption{Parameters of the model~\eqref{CMT}, with biologically plausible ranges and sources.}
\label{tab:params}
\end{table}

Throughout, we impose the following general assumptions:
\begin{equation}\label{GA}\tag{GA}
\begin{aligned}
& a,\,b,\,\gamma,\,g,\,n,\,m_0,\,d,\,\delta\in\R_{>0}\,,\quad b>m_0\,,\quad a<\gamma\,, \\[1ex]
& 0<n<\min\Big\{a, \gamma-g, \tfrac{\gamma(a+m_0)}{a+b}\Big\}\,, \\[1ex]
& 0<m_0+d<\min\Big\{\tfrac{b}{2(a-n)}\Big(\tfrac{\gamma(a+m_0)}{a+b}-n\Big), \tfrac{b}{\sqrt{2(\gamma-a)}}\sqrt{\tfrac{\gamma(a+m_0)}{a+b}-n}\Big\}\,.
\end{aligned}
\end{equation}
Unless stated otherwise, all results below assume~\eqref{GA}, whose last condition is used in Section~\ref{sec:boundary} to establish the strict ordering of the two grazing thresholds $g_0$ and $g_1$.

\smallskip

\subsection{Reduction to a planar system}\label{subsec:reduction}

Since $T=1-C-M$ by~\eqref{cover}, equation~\eqref{CMT3} is redundant, and using $M+T=1-C$ the dynamics reduce to the planar system
\begin{subequations}\label{dCdM}
\begin{align}
\dot M &= a MC + \gamma M(1-C-M) - \frac{g M}{1-C} - n M\,, \\
\dot C &= b C(1-C-M) - a MC - \big(m_0 + d C^{\delta}\big)C\,.
\end{align}
\end{subequations}
Writing $x:=M$ and $y:=C$ for the macroalgae and coral cover, \eqref{dCdM} takes the form
\begin{subequations}\label{dxdy}
\begin{align}
\dot x &= x\Big[\gamma(1-x) - n + (a-\gamma)y - \frac{g}{1-y}\Big] =: xF(x,y)\,, \label{dxdy1}\\
\dot y &= y\Big[b - m_0 - (a+b)x - by - dy^{\delta}\Big] =: yG(x,y)\,. \label{dxdy2}
\end{align}
\end{subequations}
We analyze~\eqref{dxdy} throughout, denoting partial differentiation by $\partial$, with a subscript indicating the variable, and the derivative of a function of a single variable by a prime.

\smallskip

\subsection{Invariance and absence of periodic orbits}\label{subsec:invariance}

We work on the biologically relevant region
\begin{equation}\label{Omega}
\Omega:=\big\{(x,y)\in\R^2 \,:\, x>0,\ y>0,\ x+y<1\big\}\,,
\end{equation}
which is open and simply connected. The next result shows that $\Omega$ is positively invariant and contains no periodic orbits.

\begin{lem}\label{lem:dulac}
Assume~\eqref{GA}. Then the region $\Omega$ is positively invariant for~\eqref{dxdy} and contains no periodic orbit of the system.
\end{lem}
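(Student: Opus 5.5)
Positive invariance comes from the boundary behaviour of the three sides of $\Omega$; the absence of periodic orbits comes from a Dulac function.

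\emph{Invariance.} The axes $\{x=0\}$ and $\{y=0\}$ are invariant, since $\dot x=xF$ and $\dot y=yG$. By uniqueness of solutions, no trajectory starting with $x>0$ and $y>0$ can reach $x=0$ or $y=0$ in finite time. It remains to treat the hypotenuse $x+y=1$, on which $T=0$. For $x+y=1$ with $y<1$, I compute $\tfrac{\rd}{\rd t}(x+y)=xF+yG$ there:
\[
xF = x\big[\gamma y-n+(a-\gamma)y-\tfrac{g}{1-y}\big]=x\big[ay-n-\tfrac{g}{x}\big],
\]
using $1-y=x$. Also
\[
yG=y\big[b-m_0-(a+b)x-by-dy^\delta\big]=y\big[-m_0-ax-dy^\delta\big],
\]
using $b-bx-by=0$. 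Summing gives
\[
\tfrac{\rd}{\rd t}(x+y)=axy-nx-g-m_0y-axy-dy^{1+\delta}=-nx-g-m_0y-dy^{1+\delta}<0.
\]
Hence the vector field points strictly into $\Omega$ on the open hypotenuse. The equivalent and more natural formulation is in the original variables: $\dot T=gM/(M+T)+nM+m(C)C>0$ when $T=0$.

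The corner $(0,1)$ is the delicate point, because $g/(1-y)$ blows up there. However, $x=0$ is invariant, so no orbit from $\Omega$ can approach the corner along $x=0$. Near $(0,1)$ the term $-g x/(1-y)\le -g x/(x+(1-x-y))$ only helps push $x$ down while $1-x-y$ grows. A short first-exit-time argument closes this: suppose $t^*$ is the first time a trajectory leaves $\Omega$. Then at $t^*$ it lies on $\partial\Omega$. It cannot be on the axes by uniqueness, and it cannot be on the open hypotenuse because $x+y$ is strictly decreasing there. The corner $(0,1)$ is not reachable either, since approaching it would require $x\to0$ while $x>0$ throughout; $\dot x/x$ is bounded below only away from $y=1$, so I would instead argue via $T$: $\dot T\ge m_0 C>0$ whenever $T=0$, so $T$ cannot reach $0$.

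\emph{No periodic orbits.} I use Dulac's criterion on the simply connected region $\Omega$ with $B(x,y)=1/(xy)$. Then
\[
\partial_x\Big(\tfrac{xF}{xy}\Big)+\partial_y\Big(\tfrac{yG}{xy}\Big)=\tfrac{F_x}{y}+\tfrac{G_y}{x}=-\tfrac{\gamma}{y}-\tfrac{b+d\delta y^{\delta-1}}{x}<0
\]
in $\Omega$. This divergence has constant sign, so $\Omega$ contains no periodic orbit. The Dulac step is routine.

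I expect the main obstacle to be the boundary behaviour near the corner $(0,1)$, where the system is singular. I would handle it with the $T$-monotonicity argument above, which shows that $T$ cannot reach $0$ from positive values.
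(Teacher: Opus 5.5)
Your proof is correct and follows essentially the same route as the paper. You use the same hypotenuse computation $\dot x+\dot y=-g-nx-m_0y-dy^{\delta+1}<0$ and the same Dulac function $1/(xy)$. For positivity you appeal to uniqueness and invariance of the axes, where the paper uses the exponential representation of $x(t)$ and $y(t)$. Your explicit treatment of the singular corner $(0,1)$ via $\dot T$ addresses a point the paper passes over; to make it airtight, apply the bound $\dot T\ge m_0C-(bC+\gamma M)T$ on a whole neighbourhood of $T=0$ in $\Omega$, not only on $\{T=0\}$ itself.
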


\begin{proof}
{\bf (i)} Fix $\big(x(0),y(0)\big)\in\Omega$. Since $\dot x=xF(x,y)$ and $\dot y=yG(x,y)$, the solution admits the representation
\[
x(t)=x(0)\exp\Big(\int_0^t F(x(s),y(s))\,\rd s\Big)\,,\quad
y(t)=y(0)\exp\Big(\int_0^t G(x(s),y(s))\,\rd s\Big)\,,
\]
so that $x(t)>0$ and $y(t)>0$ for all $t\ge0$. It thus remains to rule out exit through the segment $\{x+y=1\}$. Setting $z:=x+y$ and using $x=1-y$ on this segment, we obtain
\begin{equation}\label{zdot}
\dot z=\dot x+\dot y=-g-nx-m_0y-dy^{\delta+1}<0\,,
\end{equation}
since $g,n,m_0,d>0$ and $x,y>0$. Hence the trajectory cannot reach $\{x+y=1\}$, and $\Omega$ is positively invariant.

{\bf (ii)} The vector field $\mathbf V:=(xF,\,yG)$ and the rescaled field $\psi\mathbf V$ introduced below are continuously differentiable away from $\{x=0\}$, $\{y=0\}$, and $\{y=1\}$. As none of these sets meets~$\Omega$, both are of class $\mathrm{C}^1(\Omega)$ for every $\delta>0$. In particular $\mathbf V$ is locally Lipschitz on $\Omega$, so by the Picard--Lindel\"of theorem solutions with initial data in $\Omega$ exist and are unique. Choosing the Dulac function $\psi(x,y):=\tfrac{1}{xy}>0$, we obtain
\[
\partial_x\big(\psi x F\big)+\partial_y\big(\psi y G\big)
=\frac{\partial_x F}{y}+\frac{\partial_y G}{x}
=-\frac{\gamma}{y}-\frac{b+\delta dy^{\delta-1}}{x}<0
\qquad\text{on }\Omega\,.
\]
Since $\Omega$ is simply connected and this expression has a strict sign, the Bendixson--Dulac criterion rules out periodic orbits in~$\Omega$.
\end{proof}

In view of Lemma~\ref{lem:dulac}, we assume henceforth that the initial data satisfies $(x(0),y(0))\in\Omega$.

\bigskip

\section{Boundary Equilibria}\label{sec:boundary}

In this section we locate the boundary equilibria of~\eqref{dxdy} and determine their local stability. Equilibria are the intersections of the nullclines, which we therefore describe first. As we shall see, the grazing intensity~$g$ governs the stability of the non-trivial boundary equilibria through two thresholds $0<g_0<g_1<\gamma-n$ that separate a low-, an intermediate-, and a high-grazing regime.

\smallskip
\subsection{Nullclines}\label{subsec:nullclines}

The equilibria of~\eqref{dxdy} are the points of $\overline\Omega$ at which $\dot x=\dot y=0$, that is, the intersections of the {\it macroalgae nullcline} $\{x=0\}\cup\{F=0\}$ with the {\it coral nullcline} $\{y=0\}\cup\{G=0\}$. These nullclines and the equilibria at their intersections are illustrated, for several grazing rates, in Figure~\ref{fig:phaseplane}.


\medskip
\noindent\textit{The macroalgae nullcline.} Besides the $y$-axis $\{x=0\}$, the macroalgae nullcline is the zero set of $F$. Since $a<\gamma$, multiplying $F(x,y)=0$ by $1-y>0$ yields a quadratic in $y$ whose branch lying in $\overline\Omega$ is the graph
\begin{equation}\label{f}
y=M(x):=\frac{1}{2(\gamma-a)}\Big[2\gamma-a-n-\gamma x-\sqrt{(\gamma x+n-a)^2+4g(\gamma-a)}\Big]\,.
\end{equation}
The expression under the square root is bounded below by $4g(\gamma-a)>0$, so $M$ is smooth on $[0,\infty)$. 
The curve joins the point
\begin{equation}\label{yA}
A=(0,y_A)\,,\qquad y_A:=M(0)=\frac{2\gamma-a-n-\sqrt{(n-a)^2+4g(\gamma-a)}}{2(\gamma-a)}\,,
\end{equation}
on the $y$-axis to the point $B=(x_B,0)$ on the $x$-axis, where
\begin{equation}\label{xB}
x_B:=1-\frac{g+n}{\gamma}\,,
\end{equation}
and under~\eqref{GA} both $y_A\in(0,1)$ and $x_B\in(0,1)$. Differentiating~\eqref{f} gives
\begin{equation}\label{fprime}
M'(x)=-\frac{\gamma}{2(\gamma-a)}\Bigg[1+\frac{\gamma x+n-a}{\sqrt{(\gamma x+n-a)^2+4g(\gamma-a)}}\Bigg]<0\,,
\end{equation}
so $M$ is strictly decreasing on $[0,x_B]$.

\medskip
\noindent\textit{The coral nullcline.} Besides the $x$-axis $\{y=0\}$, the coral nullcline is the zero set of $G$. As $G$ is affine in $x$, the equation $G(x,y)=0$ is solved explicitly for $x$:
\begin{equation}\label{K}
x=K(y):=\frac{b-m_0-by-dy^{\delta}}{a+b}\,,\qquad y\in[0,y_C]\,,
\end{equation}
where $y_C\in(0,1)$ is the unique solution of
\begin{equation}\label{yC}
H(y_C)=b-m_0\,,\qquad H(y):=by+dy^{\delta}\,.
\end{equation}
Indeed, $H$ is continuous and strictly increasing on $[0,\infty)$ with $H(0)=0<b-m_0<b+d=H(1)$, so~\eqref{yC} admits a unique root $y_C\in(0,1)$. The curve $K$ joins $C=(0,y_C)$ on the $y$-axis to $D=(x_D,0)$ on the $x$-axis, where
\begin{equation}\label{xD}
x_D:=K(0)=\frac{b-m_0}{a+b}\in(0,1)\,.
\end{equation}
For $y>0$ we have
\begin{equation}\label{Kprime}
K'(y)=-\frac{b+\delta dy^{\delta-1}}{a+b}<0\,,
\end{equation}
so $K$ is strictly decreasing and is smooth on $(0,y_C]$, extending as a curve of class $\mathrm{C}^1$ to $[0,y_C]$ when $\delta\ge1$. 
For later use we record that
\begin{equation}\label{partials}
\partial_yF=(a-\gamma)-\frac{g}{(1-y)^2}<0\qquad\text{and}\qquad \partial_yG=-(b+\delta dy^{\delta-1})<0\qquad\text{on }\Omega\,.
\end{equation}

The boundary equilibria are now readily identified.

\begin{lem}\label{lem:boundary}
Under~\eqref{GA}, system~\eqref{dxdy} has exactly three boundary equilibria: the trivial state $O=(0,0)$, the macroalgae-only state $B=(x_B,0)$ with $x_B\in(0,1)$ as in~\eqref{xB}, and the coral-only state $C=(0,y_C)$ with $y_C\in(0,1)$ the unique root of~\eqref{yC}.
\end{lem}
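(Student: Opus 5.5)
The plan is to take the boundary of $\overline\Omega$ in the three pieces $\{x=0\}$, $\{y=0\}$ and $\{x+y=1\}$, and on each piece solve $xF=yG=0$ directly.

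\emph{The axes.} On $\{x=0\}$ the first equation holds automatically, so equilibria there are exactly the points with $yG(0,y)=0$. One option is $y=0$, which gives $O$. The other is $G(0,y)=0$, i.e. $H(y)=b-m_0$, and the discussion around~\eqref{yC} shows this has the unique root $y_C\in(0,1)$, giving $C$. On $\{y=0\}$ with $x>0$ we need $F(x,0)=0$, i.e. $\gamma(1-x)-n-g=0$. This is linear in $x$, so its only solution is $x=x_B$ from~\eqref{xB}. Positivity of $x_B$ follows from $n<\gamma-g$ in~\eqref{GA}, and $x_B<1$ is immediate, so this gives $B$. Both $B$ and $C$ are distinct from $O$ and from each other.

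\emph{The hypotenuse.} Here I need to rule out equilibria on $\{x+y=1\}$ with $x,y\ge0$. The point $(0,1)$ is excluded outright, since $F$ is singular at $y=1$ and the phase space is restricted to $y<1$. For the remaining points, note that at any equilibrium we would have $\dot z=0$ for $z=x+y$. But~\eqref{zdot}, which holds on the whole segment with $y<1$, gives $\dot z=-g-nx-m_0y-dy^{\delta+1}\le -g<0$, because $x,y\ge0$. This covers the endpoint $(1,0)$ as well, so no equilibrium lies on the hypotenuse.

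\emph{Main obstacle.} The step needing most care is the endpoint $(0,1)$, where the vector field is undefined. I will handle it by stating explicitly that equilibria are sought in $\overline\Omega\setminus\{(0,1)\}$, which is the domain of~\eqref{dxdy}. Everything else reduces to the scalar facts already established for $H$ and to the linear equation for $x_B$.
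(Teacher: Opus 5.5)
Your proof is correct and follows the paper's decomposition. The axis cases are identical; the only difference is on the segment $\{x+y=1\}$. There you use $\dot z<0$ from \eqref{zdot}, which also disposes of the endpoint $(1,0)$. The paper instead uses $G(x,1-x)=-m_0-ax-d(1-x)^{\delta}<0$ to get $\dot y<0$ for $y>0$, and leaves the endpoints to the axis cases. Your concern about $(0,1)$ needs nothing beyond the domain convention, since $G(0,1)=-m_0-d<0$ already gives $\dot y\neq0$ there.
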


\begin{proof}
A boundary equilibrium lies on $\partial\Omega$. On the open segment $\{x+y=1\}$ we have
\[
G(x,1-x)=-m_0-ax-d(1-x)^{\delta}<0\,,
\]
so that $\dot y=yG<0$ whenever $y>0$, and this segment therefore carries no equilibrium. On the $x$-axis we have $\dot y=0$, while $\dot x=xF(x,0)$ with
\[
F(x,0)=\gamma(1-x)-n-g
\]
strictly decreasing and vanishing only at $x_B$, so that the equilibria on $\{y=0\}$ are $O$ and $B$. In the same way, on the $y$-axis we have $\dot x=0$, while $\dot y=yG(0,y)$ with
\[
G(0,y)=b-m_0-by-dy^{\delta}
\]
strictly decreasing and vanishing only at $y_C$, so that the equilibria on $\{x=0\}$ are $O$ and $C$.
\end{proof}

\smallskip

\subsection{Local stability of the boundary equilibria}\label{subsec:stab-boundary}

Linearizing~\eqref{dxdy}, and recalling~\eqref{partials} together with $\partial_xF=-\gamma$ and $\partial_xG=-(a+b)$, the Jacobian is
\begin{equation}\label{Jac}
J(x,y)=\begin{pmatrix} F+x\,\partial_xF & x\partial_yF\\[4pt] y\partial_xG & G+y\partial_yG\end{pmatrix}
=\begin{pmatrix} F-\gamma x & x\partial_yF\\[4pt] -(a+b)y & G+y\partial_yG\end{pmatrix}\,,
\end{equation}
where the entries are evaluated using $\partial_yF$ and $\partial_yG$ from~\eqref{partials}. We evaluate~\eqref{Jac} at each boundary equilibrium.

\begin{lem}\label{lem:stabO}
Under~\eqref{GA}, the trivial equilibrium $O=(0,0)$ is an unstable node.
\end{lem}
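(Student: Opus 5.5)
The plan is to evaluate the Jacobian \eqref{Jac} at $O=(0,0)$ and read off the eigenvalues directly. At $x=y=0$ both off-diagonal entries vanish, since they carry the factors $x$ and $y$. One caveat is that $\partial_yG$ contains $y^{\delta-1}$, which is singular at $y=0$ when $\delta<1$. The $(2,2)$ entry, however, is $G+y\partial_yG = G-by-\delta d y^{\delta}$, and this tends to $G(0,0)$ as $y\to0$ for every $\delta>0$. So $J(O)$ is well defined, and it is diagonal:
\[
J(O)=\begin{pmatrix} F(0,0) & 0\\ 0 & G(0,0)\end{pmatrix},\qquad F(0,0)=\gamma-n-g,\quad G(0,0)=b-m_0 .
\]

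Next I would check the signs of the two eigenvalues using \eqref{GA}. The assumption $n<\gamma-g$ gives $\gamma-n-g>0$, and the assumption $b>m_0$ gives $b-m_0>0$. Both eigenvalues are therefore real and strictly positive. By the Hartman--Grobman theorem, $O$ is then a hyperbolic source with two positive real eigenvalues, which makes it an unstable node. This holds even when the two eigenvalues coincide (a star node), which is still a node.

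The only delicate point is regularity at $O$ when $\delta<1$. There $yG$ is only $C^1$ in $y$ at $y=0$: indeed $y\cdot y^{\delta}=y^{1+\delta}$ is $C^1$ with zero derivative at $0$. The right-hand side of \eqref{dxdy} is therefore $C^1$ near $O$, since $F$ is smooth near $y=0$. This is enough for linearization to determine the local behaviour at a hyperbolic equilibrium, so Hartman--Grobman applies.

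I would add one further observation. The axes are invariant, and on each axis the one-dimensional flow moves away from $O$: we have $\dot x = xF(x,0)$ with $F(0,0)>0$, and $\dot y = yG(0,y)$ with $G(0,0)>0$. This confirms the repelling node structure directly.
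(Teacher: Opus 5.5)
Your proof is correct and is essentially the paper's own argument: at $O$ the Jacobian \eqref{Jac} is diagonal with eigenvalues $\gamma-n-g>0$ and $b-m_0>0$ by \eqref{GA}, and the paper stops there. Your discussion of regularity for $\delta<1$ is extra care the paper omits; note that Hartman--Grobman by itself only gives a topological source, so in that low-regularity case it is your observation that the axes are invariant (which rules out spiralling) that actually justifies calling $O$ a node.
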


\begin{proof}
By~\eqref{Jac},
\[
J(0,0)=\begin{pmatrix}\gamma-n-g & 0\\[3pt] 0 & b-m_0\end{pmatrix}\,,
\]
whose eigenvalues $\gamma-n-g$ and $b-m_0$ are both positive by~\eqref{GA}.
\end{proof}

We next turn to the macroalgae-only state $B$, whose stability is set by the threshold
\begin{equation}\label{g1}
g_1:=\frac{\gamma(a+m_0)}{a+b}-n\,.
\end{equation}

\begin{thm}\label{thm:stabB}
Suppose~\eqref{GA} holds. Then $0<g_1<\gamma-n$, and for every $\delta>0$ the macroalgae-only state $B$ is a stable node when $g<g_1$, a saddle when $g>g_1$, and nonhyperbolic at $g=g_1$.
\end{thm}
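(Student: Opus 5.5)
First I will establish the bounds on $g_1$ directly from~\eqref{GA}. Positivity $g_1>0$ is exactly the hypothesis $n<\gamma(a+m_0)/(a+b)$. For the upper bound, $g_1<\gamma-n$ is equivalent to $\gamma(a+m_0)/(a+b)<\gamma$, that is, to $a+m_0<a+b$, which holds because $b>m_0$.

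Next I will evaluate the Jacobian~\eqref{Jac} at $B=(x_B,0)$. Since $F(x_B,0)=0$ by~\eqref{xB} and $y=0$, the matrix is upper triangular:
\[
J(x_B,0)=\begin{pmatrix}-\gamma x_B & x_B\,\partial_yF(x_B,0)\\[3pt] 0 & G(x_B,0)\end{pmatrix}\,.
\]
Its eigenvalues are therefore $\lambda_1=-\gamma x_B$ and $\lambda_2=G(x_B,0)=b-m_0-(a+b)x_B$. Because $x_B\in(0,1)$, which follows from $g+n<\gamma$ under~\eqref{GA} (the hypothesis $n<\gamma-g$), we have $\lambda_1<0$. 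These eigenvalues do not involve $d$ or $\delta$, since the crowding term $dy^\delta$ and its derivative $\delta d y^{\delta-1}$ enter only in the $(2,2)$ entry multiplied by $y=0$, or are absent at $y=0$. The one exception is the case $\delta<1$, where $\partial_yG$ blows up at $y=0$. I will handle this by noting that $y\,\partial_yG=-(by+\delta d y^{\delta})\to0$, so the vector field is still $C^1$ in the relevant entry. This explains why the statement holds for every $\delta>0$. If needed, I will remark that linearization is justified there because $yG(x,y)$ has partial derivative $G+y\partial_yG$, which extends continuously to $y=0$.

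The main step is the sign of $\lambda_2$. Substituting $x_B=1-(g+n)/\gamma$ gives
\[
\lambda_2=b-m_0-(a+b)+\frac{(a+b)(g+n)}{\gamma}=\frac{a+b}{\gamma}\Big(g+n-\frac{\gamma(a+m_0)}{a+b}\Big)=\frac{a+b}{\gamma}\,(g-g_1)\,.
\]
So $\lambda_2<0$ for $g<g_1$, $\lambda_2>0$ for $g>g_1$, and $\lambda_2=0$ at $g=g_1$. For $g<g_1$, both eigenvalues are real and negative, so $B$ is a stable node. Strictly speaking, when $\lambda_1=\lambda_2$ it is a degenerate or star node, which is still a stable node. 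For $g>g_1$ the eigenvalues are real with opposite signs, so $B$ is a saddle. At $g=g_1$ there is a zero eigenvalue, so $B$ is nonhyperbolic.

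The only delicate point is the regularity at $y=0$ when $\delta<1$, so that the Hartman--Grobman theorem applies. I expect this to be a minor remark rather than a real obstacle.
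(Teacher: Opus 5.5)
Your proposal is correct and follows the paper's proof essentially verbatim. Like the paper, you use that the Jacobian at $B$ is upper triangular with eigenvalues $\lambda_1=-\gamma x_B=-(\gamma-g-n)<0$ and $\lambda_2=\frac{a+b}{\gamma}(g-g_1)$, and you obtain $0<g_1<\gamma-n$ from the hypothesis on $n$ in~\eqref{GA} together with $b>m_0$. Your extra care about regularity at $y=0$ for $\delta<1$ is something the paper passes over silently. Since the field is only defined for $y\ge0$ there, replacing $y^{\delta+1}$ by the $\mathrm{C}^1$ extension $y|y|^{\delta}$ before invoking Hartman--Grobman makes that remark precise.
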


\begin{rem}
The threshold $g_1$ is a bifurcation value: Section~\ref{sec:bif} shows that $B$ is then a saddle-node point at which the system undergoes a transcritical bifurcation as $g$ crosses $g_1$ (Theorem~\ref{thm:transcritB} and Proposition~\ref{prop:sn}).
\end{rem}

\begin{proof}
At $B=(x_B,0)$ the Jacobian~\eqref{Jac} is upper triangular,
\[
J(x_B,0)=\begin{pmatrix}-(\gamma-g-n) & x_B(a-\gamma-g)\\[3pt] 0 & -(a+m_0)+\dfrac{g+n}{\gamma}(a+b)\end{pmatrix}\,,
\]
so that its eigenvalues are the diagonal entries. By~\eqref{GA} the first is $\lambda_1=-(\gamma-g-n)<0$, while the second, by the definition~\eqref{g1} of $g_1$, satisfies
\[
\lambda_2=-(a+m_0)+\frac{g+n}{\gamma}(a+b)=\frac{a+b}{\gamma}(g-g_1)\,.
\]
Thus $\lambda_2<0$ for $g<g_1$, $\lambda_2=0$ for $g=g_1$, and $\lambda_2>0$ for $g>g_1$, so that $B$ is a stable node when $g<g_1$, nonhyperbolic when $g=g_1$, and a saddle when $g>g_1$. Finally $g_1<\gamma-n$ since $m_0<b$, and $g_1>0$ by~\eqref{GA}.
\end{proof}

The stability of the coral-only state $C$ is governed by the threshold
\begin{equation}\label{g0}
g_0:=(1-y_C)\big(\gamma-n+(a-\gamma)y_C\big)\,,
\end{equation}
with $y_C$ as in~\eqref{yC}.

\begin{thm}\label{thm:stabC}
Suppose~\eqref{GA} holds. Then $g_0>0$, and for every $\delta>0$ the coral-only state $C$ is a stable node when $g>g_0$, a saddle when $g<g_0$, and nonhyperbolic at $g=g_0$.
\end{thm}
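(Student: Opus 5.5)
Evaluate the Jacobian~\eqref{Jac} at $C=(0,y_C)$, mirroring the proof of Theorem~\ref{thm:stabB}. Since $x=0$ there, the $(1,2)$ entry $x\partial_yF$ vanishes, so $J(0,y_C)$ is lower triangular and its eigenvalues are the diagonal entries. The first is
\[
\lambda_1=F(0,y_C)=\gamma-n+(a-\gamma)y_C-\frac{g}{1-y_C}=\frac{g_0-g}{1-y_C}\,,
\]
using the definition~\eqref{g0} of $g_0$ and $1-y_C>0$. The second is $\lambda_2=G(0,y_C)+y_C\,\partial_yG(0,y_C)$. Here $G(0,y_C)=0$ by~\eqref{yC}, and $\partial_yG=-(b+\delta d y_C^{\delta-1})<0$ by~\eqref{partials}, so $\lambda_2=-y_C(b+\delta d y_C^{\delta-1})<0$ for every $\delta>0$. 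Note that $y_C>0$, so $y_C^{\delta-1}$ is finite even when $\delta<1$.

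From the sign of $\lambda_1$ we obtain the trichotomy. If $g>g_0$, both eigenvalues are negative, so $C$ is a stable node. If $g<g_0$, the eigenvalues have opposite signs, so $C$ is a saddle. If $g=g_0$, then $\lambda_1=0$ and $C$ is nonhyperbolic.

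It remains to show $g_0>0$. Since $0<y_C<1$, it suffices that $\gamma-n+(a-\gamma)y_C>0$. This expression is affine in $y_C$. At $y_C=0$ it equals $\gamma-n>0$, because $n<\gamma-g<\gamma$ by~\eqref{GA}. At $y_C=1$ it equals $a-n>0$, because $n<a$ by~\eqref{GA}. An affine function positive at both endpoints is positive on all of $[0,1]$, which gives $g_0>0$.

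None of the steps is a real obstacle. The only care needed is checking that $\lambda_2$ has a strict sign for all $\delta>0$, which holds because $y_C$ lies in the interior of $(0,1)$, and using both bounds on $n$ from~\eqref{GA} to get positivity at the two endpoints.
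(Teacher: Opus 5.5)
Your proof is correct and follows the paper's argument: the Jacobian at $C$ is lower triangular with eigenvalues $F(0,y_C)=(g_0-g)/(1-y_C)$ and $-y_C(b+\delta d y_C^{\delta-1})<0$, and the sign of the first gives the trichotomy. Your endpoint check on the affine expression $\gamma-n+(a-\gamma)y_C$ is equivalent to the paper's direct bound $\gamma-n+(a-\gamma)y_C>a-n>0$, which uses $a<\gamma$ and $y_C<1$.
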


\begin{rem}
The threshold $g_0$ is a bifurcation value: Section~\ref{sec:bif} shows that $C$ is then a saddle-node point at which the system undergoes a transcritical bifurcation as $g$ crosses $g_0$ (Theorem~\ref{thm:transcritC} and Proposition~\ref{prop:sn}).
\end{rem}

\begin{proof}
At $C=(0,y_C)$ the Jacobian~\eqref{Jac} is lower triangular,
\[
J(0,y_C)=\begin{pmatrix}\mu_1 & 0\\[3pt] -(a+b)y_C & \mu_2\end{pmatrix}\,,
\]
so that its eigenvalues are the diagonal entries $\mu_1$ and $\mu_2$. The lower-right entry simplifies by~\eqref{yC} from $b-m_0-2by_C-(1+\delta)dy_C^{\delta}$ to
\[
\mu_2=-by_C-\delta dy_C^{\delta}<0\,,
\]
while the upper-left entry, by the definition~\eqref{g0} of $g_0$, satisfies
\[
\mu_1=F(0,y_C)=\gamma-n+(a-\gamma)y_C-\frac{g}{1-y_C}=\frac{g_0-g}{1-y_C}\,.
\]
Since $1-y_C>0$, we have $\mu_1>0$ for $g<g_0$, $\mu_1=0$ for $g=g_0$, and $\mu_1<0$ for $g>g_0$, so that $C$ is a saddle when $g<g_0$, nonhyperbolic when $g=g_0$, and a stable node when $g>g_0$. Moreover $\gamma-n+(a-\gamma)y_C>a-n>0$ since $a>n$, so that $g_0>0$.
\end{proof}

The two thresholds are ordered, which is what separates the grazing regimes studied in Section~\ref{sec:interior}.

\begin{lem}\label{lem:ordering}
Under~\eqref{GA}, the thresholds satisfy $0<g_0<g_1<\gamma-n$.
\end{lem}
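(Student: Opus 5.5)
The plan is to take the bounds $g_0>0$ and $g_1<\gamma-n$ from Theorems~\ref{thm:stabC} and~\ref{thm:stabB}, so only the middle inequality $g_0<g_1$ remains. I expect it to follow from the last condition in~\eqref{GA}, which was designed for this purpose. I will express $g_0$ as a function of $s:=1-y_C\in(0,1)$, bound $s$ by a quantity involving $m_0+d$, and then apply the two branches of the minimum in~\eqref{GA} separately.

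\emph{Step 1 (rewrite $g_0$).} Write $\gamma-n+(a-\gamma)y_C=(a-n)+(\gamma-a)(1-y_C)$. Then~\eqref{g0} becomes
\[
g_0=\varphi(s):=(a-n)s+(\gamma-a)s^2\,,\qquad s=1-y_C\,.
\]
Since $a>n$ and $\gamma>a$ by~\eqref{GA}, $\varphi$ is strictly increasing on $[0,\infty)$.

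\emph{Step 2 (bound $s$).} The defining relation~\eqref{yC}, $by_C+dy_C^{\delta}=b-m_0$, can be rearranged as
\[
bs=b(1-y_C)=m_0+dy_C^{\delta}\le m_0+d\,,
\]
because $y_C\in(0,1)$. Hence $0<s\le S:=(m_0+d)/b$. Monotonicity of $\varphi$ then gives $g_0\le(a-n)S+(\gamma-a)S^2$.

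\emph{Step 3 (use~\eqref{GA}).} Note that $\frac{\gamma(a+m_0)}{a+b}-n=g_1$, which is positive by~\eqref{GA}. The last line of~\eqref{GA} therefore states that
\[
S<\frac{g_1}{2(a-n)}\quad\text{and}\quad S<\sqrt{\frac{g_1}{2(\gamma-a)}}\,.
\]
The first inequality gives $(a-n)S<g_1/2$, and the second gives $(\gamma-a)S^2<g_1/2$. Adding these yields $g_0\le\varphi(S)<g_1$.

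The only step that needs an idea is Step~1. One has to see that $g_0$ becomes a quadratic with positive coefficients when written in $1-y_C$ rather than $y_C$, and that $1-y_C$ is controlled by $(m_0+d)/b$. Once this is in place, the two branches of the minimum in~\eqref{GA} match the linear and quadratic terms of $\varphi$ exactly.
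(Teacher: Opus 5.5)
Your proof is correct and is essentially the paper's argument: it bounds $1-y_C<(m_0+d)/b$ via~\eqref{yC}, deduces $g_0<(a-n)S+(\gamma-a)S^2$ with $S=(m_0+d)/b$, and uses the two branches of the last condition in~\eqref{GA} to make each summand smaller than $g_1/2$. Writing $g_0$ as the increasing quadratic $(a-n)s+(\gamma-a)s^2$ in $s=1-y_C$ is a tidier packaging of the paper's factor-by-factor bound, not a different route.
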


\begin{proof}
We have already established $g_0>0$ and $0<g_1<\gamma-n$, so it remains to prove $g_0<g_1$. Since $0<y_C<1$ and $\delta>0$ give $0<y_C^{\delta}<1$, relation~\eqref{yC} yields $by_C=b-m_0-dy_C^{\delta}>b-m_0-d$, that is,
\[
1-y_C<\frac{m_0+d}{b}\,.
\]
Using this lower bound for $y_C$ once more,
\[
\gamma-n+(a-\gamma)y_C=\gamma-n-(\gamma-a)y_C<(a-n)+(\gamma-a)\frac{m_0+d}{b}\,,
\]
and therefore, by~\eqref{g0},
\begin{equation}\label{g0bound}
g_0<\frac{m_0+d}{b}\left[(a-n)+(\gamma-a)\frac{m_0+d}{b}\right]
=\frac{(a-n)(m_0+d)}{b}+\frac{(\gamma-a)(m_0+d)^2}{b^2}\,.
\end{equation}
By the last condition in~\eqref{GA}, $m_0+d<\tfrac{b}{2(a-n)}g_1$ and $m_0+d<b\sqrt{\tfrac{g_1}{2(\gamma-a)}}$, so each summand in~\eqref{g0bound} is strictly less than $\tfrac{g_1}{2}$. Hence $g_0<g_1$.
\end{proof}

\begin{rem}\label{rem:thresholds}
The two thresholds control the boundary stability monotonically in $g$. The proofs of Theorems~\ref{thm:stabB} and~\ref{thm:stabC} give
\[
\lambda_2=\frac{a+b}{\gamma}(g-g_1),\qquad
\mu_1=\frac{g_0-g}{1-y_C},
\]
hence
\[
\partial_g\lambda_2=\frac{a+b}{\gamma}>0,\qquad
\partial_g\mu_1=-\frac{1}{1-y_C}<0 .
\]
As $g$ increases, $\lambda_2$ therefore increases through $0$ at $g=g_1$ and $\mu_1$ decreases through $0$ at $g=g_0$, so stronger grazing destabilizes $B$ and stabilizes $C$. Since $g_0<g_1$ by Lemma~\ref{lem:ordering}, the coral state gains stability before the macroalgae state loses it, and $B$ and $C$ are simultaneously stable for $g_0<g<g_1$. In terms of the nullcline intercepts~\eqref{yA}--\eqref{xB} and~\eqref{xD},
\[
g>g_0 \quad\text{if and only if}\quad y_C>y_A,
\qquad
g<g_1 \quad\text{if and only if}\quad x_D<x_B .
\]
\end{rem}

\bigskip

\section{Interior Equilibria}\label{sec:interior}

An interior equilibrium is a point $(x^\star,y^\star)\in\Omega$ at which $F=G=0$, and it corresponds to the coexistence of coral, macroalgae, and turf, the last occupying the remaining cover $1-x^\star-y^\star>0$. In this section we reduce the search for such points to a scalar equation, relate the stability of an interior equilibrium to the derivative of the reduced function $L$ introduced below, and classify the dynamics in each grazing regime.

\smallskip

\subsection{The reduced equation and its concavity}\label{subsec:reduced}

By~\eqref{K}, the coral nullcline is the graph $x=K(y)$, $y\in[0,y_C]$, so every interior equilibrium has the form $(K(y^\star),y^\star)$ with $y^\star\in(0,y_C)$ a root of $F\big(K(y),y\big)$. Since $1-K(y)=\tfrac{a+m_0+by+dy^{\delta}}{a+b}$, we set
\begin{equation}\label{Ldef}
L(y):=F\big(K(y),y\big)=\frac{\gamma(a+m_0+by+dy^{\delta})}{a+b}-n+(a-\gamma)y-\frac{g}{1-y}\,,\qquad y\in[0,y_C]\,.
\end{equation}
Every root of $L$ in $(0,y_C)$ indeed yields an interior equilibrium.

\begin{lem}\label{lem:inOmega}
If $y\in(0,y_C)$, then $(K(y),y)\in\Omega$.
\end{lem}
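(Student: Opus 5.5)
We verify the three defining inequalities of $\Omega$ in \eqref{Omega} directly for the point $(K(y),y)$ with $y\in(0,y_C)$. The condition $y>0$ holds by assumption, so two checks remain: $K(y)>0$ and $K(y)+y<1$.

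\emph{Positivity of $K(y)$.} By \eqref{K} and \eqref{yC}, the numerator of $K(y)$ is $b-m_0-H(y)$. Since $H$ is strictly increasing on $[0,\infty)$ and $y<y_C$, we get $H(y)<H(y_C)=b-m_0$. Hence the numerator is strictly positive, and dividing by $a+b>0$ gives $K(y)>0$. Equivalently, $K$ is strictly decreasing with $K(y_C)=0$ by \eqref{Kprime}; but comparing $H$ avoids any regularity issue at $y=0$ when $\delta<1$.

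\emph{The constraint $K(y)+y<1$.} We use the identity recorded just before \eqref{Ldef}:
\[
1-K(y)=\frac{a+m_0+by+dy^{\delta}}{a+b}.
\]
The target inequality is $1-K(y)>y$. Multiplying by $a+b>0$, it becomes
\[
a+m_0+by+dy^{\delta}>(a+b)y,
\]
that is, $a(1-y)+m_0+dy^{\delta}>0$. This holds because $y<y_C<1$ gives $a(1-y)>0$, while $m_0>0$ and $dy^{\delta}\ge0$.

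Neither step presents a real obstacle. The only point needing care is to use strict inequalities throughout, which rely on the strict monotonicity of $H$ and on $y_C<1$ from Lemma~\ref{lem:boundary}. This lets us conclude that $(K(y),y)$ lies in the open set $\Omega$ and not merely in its closure.
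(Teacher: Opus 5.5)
Your proof is correct and matches the paper's argument. Positivity of $K(y)$ comes from the strict monotonicity of $H$ with $H(y_C)=b-m_0$. Your inequality $a(1-y)+m_0+dy^{\delta}>0$ is just a rearrangement of the paper's $ay-dy^{\delta}<a<a+m_0$, which it uses to get $K(y)+y<1$.
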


\begin{proof}
Recall from~\eqref{yC} the strictly increasing function $H(y)=by+dy^{\delta}$. Since $H(y)<H(y_C)=b-m_0$ for $y<y_C$, we have $K(y)>0$. Moreover $K(y)+y=\tfrac{b-m_0+ay-dy^{\delta}}{a+b}<1$, since $ay-dy^{\delta}<a<a+m_0$ for $0<y<1$. Thus $(K(y),y)\in\Omega$.
\end{proof}

Consequently the number of interior equilibria equals the number of roots of $L$ in $(0,y_C)$. Evaluating $L$ at the endpoints and using $a+m_0+by_C+dy_C^{\delta}=a+b$ from~\eqref{yC},
\begin{equation}\label{Lends}
L(0)=g_1-g\,,\qquad L(y_C)=\frac{g_0-g}{1-y_C}\,,
\end{equation}
with $g_0,g_1$ from~\eqref{g0} and~\eqref{g1}. The number of roots is determined by the shape of $L$, whose derivatives are
\begin{equation}\label{Lderivs}
L'(y)=\frac{\gamma(b+\delta d\,y^{\delta-1})}{a+b}+(a-\gamma)-\frac{g}{(1-y)^2}\,,\quad
L''(y)=\frac{\gamma\delta(\delta-1)dy^{\delta-2}}{a+b}-\frac{2g}{(1-y)^3}\,,
\end{equation}
for $y\in(0,y_C]$. We record when $L$ is concave.

\begin{lem}\label{lem:concavity}
Let $y\in(0,y_C]$. {\bf (i)} If $0<\delta\le1$, then $L''(y)<0$. {\bf (ii)} If $\delta\ge2$ and
\begin{equation}\label{concbound}
2g\ge\frac{\gamma\delta(\delta-1)d}{a+b}y_C^{\delta-2}\,,
\end{equation}
then $L''(y)<0$. In either case $L$ is strictly concave on $(0,y_C]$.
\end{lem}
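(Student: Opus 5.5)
The plan is to work directly from the explicit formula~\eqref{Lderivs},
\[
L''(y)=\frac{\gamma\delta(\delta-1)d\,y^{\delta-2}}{a+b}-\frac{2g}{(1-y)^3}\,,
\]
and compare its two terms on $(0,y_C]$. The second term is strictly negative throughout, since $g>0$ and $0<y\le y_C<1$. Moreover, $y\mapsto (1-y)^{-3}$ is increasing with $(1-y)^{-3}>1$ for $y\in(0,1)$, so we have the uniform bound
\[
\frac{2g}{(1-y)^3}>2g .
\]
The whole argument reduces to controlling the first term.

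For \textbf{(i)}, with $0<\delta\le1$, the coefficient $\delta(\delta-1)$ is nonpositive while $\gamma,d,a+b,y^{\delta-2}$ are positive. Hence the first term is $\le0$, and adding the strictly negative second term gives $L''(y)<0$. When $\delta=1$ the first term vanishes identically, and strictness comes from the $g$-term alone.

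For \textbf{(ii)}, with $\delta\ge2$, the exponent $\delta-2\ge0$, so $y^{\delta-2}$ is nondecreasing on $(0,y_C]$ and bounded by $y_C^{\delta-2}$. When $\delta=2$ this function is identically $1=y_C^{0}$. Therefore
\[
\frac{\gamma\delta(\delta-1)d\,y^{\delta-2}}{a+b}\le\frac{\gamma\delta(\delta-1)d\,y_C^{\delta-2}}{a+b}\le 2g<\frac{2g}{(1-y)^3}\,,
\]
where the middle inequality is exactly~\eqref{concbound} and the last is the strict bound above. This yields $L''(y)<0$ on $(0,y_C]$.

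Strict concavity of $L$ on $(0,y_C]$ then follows from $L''<0$ on this interval, since $L$ is $C^2$ on $(0,y_C]$ because $y\le y_C<1$ keeps $1-y$ away from $0$. The only point needing care is the strictness in case (ii) when~\eqref{concbound} holds with equality. This is why the strict inequality $(1-y)^{-3}>1$, valid because $y>0$, should be used rather than a non-strict one, and I expect no genuine obstacle beyond that.
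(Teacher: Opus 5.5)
Your proof is correct and follows essentially the same route as the paper: in (i) you use the sign of $\delta(\delta-1)\le0$ together with the strictly negative $g$-term, and in (ii) you use the monotonicity of $y^{\delta-2}$, the bound~\eqref{concbound}, and the strict inequality $(1-y)^{-3}>1$ for $y>0$. Your added remark that $L$ is $C^2$ on $(0,y_C]$, so that $L''<0$ yields strict concavity, is a harmless extra detail that the paper leaves implicit.
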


\begin{proof}
{\bf (i)} For $0<\delta\le1$ one has $\delta(\delta-1)\le0$, so the first term of $L''$ is nonpositive while the second is negative. {\bf (ii)} For $\delta\ge2$ the map $y\mapsto y^{\delta-2}$ is nondecreasing, so 
\[
\tfrac{\gamma\delta(\delta-1)d}{a+b}y^{\delta-2}\le\tfrac{\gamma\delta(\delta-1)d}{a+b}y_C^{\delta-2}\le 2g
\]
by~\eqref{concbound}, while $\tfrac{2g}{(1-y)^3}>2g$ for $y>0$, and hence $L''(y)<0$. Finally, since $y_C^{\delta-2}\le1$, the simpler bound $2g\ge\tfrac{\gamma\delta(\delta-1)d}{a+b}$ already implies~\eqref{concbound}, so it too suffices.
\end{proof}

The remaining range $1<\delta<2$, where $L$ fails to be concave near $y=0$, is analyzed in Subsection~\ref{subsec:nonconcave}.

\smallskip

\subsection{Stability of an interior equilibrium}\label{subsec:int-stab}

Let $E^\star=(x^\star,y^\star)$ with $x^\star=K(y^\star)$ be an interior equilibrium, $y^\star\in(0,y_C)$ a root of $L$. As $F(E^\star)=G(E^\star)=0$, the Jacobian~\eqref{Jac} reduces to
\begin{equation}\label{eq:Jacobi_iE}
J(E^\star)=\begin{pmatrix} x^\star\partial_xF(E^\star) & x^\star\partial_yF(E^\star)\\[3pt] y^\star\partial_xG(E^\star) & y^\star\partial_yG(E^\star)\end{pmatrix}\,.
\end{equation}
Its trace is
\begin{equation}\label{trJ}
\operatorname{tr}J(E^\star)=x^\star\partial_xF(E^\star)+y^\star\partial_yG(E^\star)=-\gamma x^\star-y^\star\big(b+\delta d(y^\star)^{\delta-1}\big)<0\,.
\end{equation}
For the determinant, the chain rule applied to~\eqref{Ldef} gives
\begin{equation}\label{Lstar}
L'(y^\star)=\partial_xF(E^\star)K'(y^\star)+\partial_yF(E^\star)=\frac{\gamma\big(b+\delta d(y^\star)^{\delta-1}\big)}{a+b}+\partial_yF(E^\star)\,,
\end{equation}
so that $\big(\partial_xF\partial_yG-\partial_yF\partial_xG\big)(E^\star)=\gamma\big(b+\delta d(y^\star)^{\delta-1}\big)+(a+b)\partial_yF(E^\star)=(a+b)L'(y^\star)$, and therefore
\begin{equation}\label{detJ}
\det J(E^\star)=x^\star y^\star\big(\partial_xF\partial_yG-\partial_yF\partial_xG\big)(E^\star)=(a+b)x^\star y^\star L'(y^\star)\,.
\end{equation}
The off-diagonal entries $x^{\star}\partial_yF(E^\star)$ and $y^\star\partial_xG(E^\star)=-(a+b)y^\star$ are both negative, so that their product is positive and the eigenvalues of $J(E^\star)$ are real. In view of~\eqref{trJ} and~\eqref{detJ}, the equilibrium $E^\star$ is therefore a stable node when $L'(y^\star)>0$ and a saddle when $L'(y^\star)<0$, so that the sign of $L'$ at a root decides stability and the whole interior dynamics is read off from the graph of $L$.

\smallskip

\subsection{Classification of the dynamics}\label{subsec:classification}

It is convenient to write $L(y)=\dfrac{\tilde g(y)-g}{1-y}$,
where
\begin{equation}\label{tildeg}
\tilde g(y):=(1-y)\Big[\tfrac{\gamma}{a+b}\big(a+m_0+by+dy^{\delta}\big)-n+(a-\gamma)y\Big]\,,\qquad y\in[0,y_C]\,,
\end{equation}
is independent of $g$. Then $\tilde g(0)=g_1$ and $\tilde g(y_C)=g_0$, and a root of $L$ is a point where $\tilde g(y)=g$, while $L'(y^\star)$ and $\tilde g'(y^\star)$ share the same sign there, so that an interior equilibrium is a stable node when $\tilde g'(y^\star)>0$ and a saddle when $\tilde g'(y^\star)<0$. Set
\begin{equation}\label{gstar}
g^\star:=\max_{y\in[0,y_C]}\tilde g(y)\in[g_1,\gamma-n)\,.
\end{equation}
The complete picture is the following.

\begin{thm}\label{thm:classification}
Suppose~\eqref{GA} holds and $L$ is strictly concave on $(0,y_C]$ (e.g.\ $0<\delta\le1$, or $\delta\ge2$ with~\eqref{concbound}). The interior equilibria of~\eqref{dxdy} are classified by the grazing regime:
\begin{itemize}
\item[\textup{(i)}] \emph{Low grazing.} For $0<g<g_0$ there is no interior equilibrium.
\item[\textup{(ii)}] \emph{Intermediate grazing.} For $g_0<g<g_1$ there is a unique interior equilibrium $E^\star=(K(y^\star),y^\star)$, and it is a saddle.
\item[\textup{(iii)}] \emph{High grazing.} Let $g_1<g<\gamma-n$. If $g^\star=g_1$, there is no interior equilibrium. If $g^\star>g_1$, there is none for $g^\star<g<\gamma-n$, exactly two for $g_1<g<g^\star$, a stable node $E_1^\star=(K(y_1^\star),y_1^\star)$ and a saddle $E_2^\star=(K(y_2^\star),y_2^\star)$ with $0<y_1^\star<y_2^\star<y_C$, and a single nonhyperbolic equilibrium $E_3^\star=(K(y_3^\star),y_3^\star)$ at $g=g^\star$.
\end{itemize}
\end{thm}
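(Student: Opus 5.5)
The plan is to work entirely with the scalar function $L$ from~\eqref{Ldef}. By Lemma~\ref{lem:inOmega}, interior equilibria correspond one-to-one to roots of $L$ in $(0,y_C)$. By Subsection~\ref{subsec:int-stab}, such a root is a stable node if $L'(y^\star)>0$ and a saddle if $L'(y^\star)<0$. If $L'(y^\star)=0$ the equilibrium is nonhyperbolic, since then $\det J(E^\star)=0$ by~\eqref{detJ}.

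First I extend the concavity hypothesis to the closed interval. $L$ is continuous on $[0,y_C]$ and strictly concave on $(0,y_C]$, hence strictly concave on $[0,y_C]$. This is the one technical point I expect to need care: for $\delta<1$ the derivative $L'$ blows up at $0$, so I avoid $L'(0)$ altogether. I only use chord inequalities, which pass to the endpoint by continuity, and one-sided slopes at interior points. The two tools are then:
\begin{itemize}
\item a strictly concave function on $[0,y_C]$ lies strictly above its chord between any two points;
\item at an interior root $r$, $L'(r)$ is strictly greater than the chord slope to any point on its right and strictly less than the chord slope to any point on its left.
\end{itemize}
The endpoint values~\eqref{Lends}, $L(0)=g_1-g$ and $L(y_C)=(g_0-g)/(1-y_C)$, together with the ordering $g_0<g_1$ of Lemma~\ref{lem:ordering}, fix the sign pattern in each regime.

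\emph{Case (i).} Here $g<g_0<g_1$, so both endpoint values are positive. The chord joining them is therefore positive, and $L$ lies strictly above it, so $L>0$ on $[0,y_C]$ and there are no roots.

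\emph{Case (ii).} Here $L(0)>0>L(y_C)$, so the intermediate value theorem gives a root. Uniqueness comes from a chord argument. If $r_1<r_2$ were two roots in $(0,y_C)$, the chord from $(0,L(0))$ to $(r_2,0)$ would be positive at $r_1$, forcing $L(r_1)>0$, a contradiction. For stability, compare with the chord to the right endpoint:
\[
L'(y^\star)<\frac{L(y_C)-L(y^\star)}{y_C-y^\star}=\frac{L(y_C)}{y_C-y^\star}<0,
\]
so $E^\star$ is a saddle.

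\emph{Case (iii).} Here $g>g_1>g_0$, so both endpoint values of $L$ are negative and every root lies in the open interval. Strict concavity gives a unique maximizer $\hat y\in[0,y_C]$, and $L$ has zero, one or two roots according as $\max L<0$, $=0$ or $>0$. To tie this to $g^\star$ I use the representation $L=(\tilde g-g)/(1-y)$ with $1-y>0$. This gives $L(y)\ge0$ exactly when $\tilde g(y)\ge g$, so $\max L$ has the sign of $g^\star-g$ by~\eqref{gstar}.
\begin{itemize}
\item If $g^\star=g_1$, then $g>g^\star$ throughout the range and there are no roots.
\item If $g^\star>g_1$ and $g^\star<g<\gamma-n$, there are again no roots.
\item If $g_1<g<g^\star$, there are two roots $y_1^\star<\hat y<y_2^\star$, both inside $(0,y_C)$ because the endpoint values are negative. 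Strict concavity gives $L'(y_1^\star)>(L(\hat y)-0)/(\hat y-y_1^\star)>0$ and symmetrically $L'(y_2^\star)<0$. So $E_1^\star$ is a stable node and $E_2^\star$ is a saddle.
\item If $g=g^\star$, the unique root is $y_3^\star=\hat y$. It is an interior maximizer because $L(0),L(y_C)<0$, so $L'(y_3^\star)=0$ and $E_3^\star$ is nonhyperbolic.
\end{itemize}
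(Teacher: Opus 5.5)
The step that fails is the saddle claim in case (ii). Apart from that, your strategy is the paper's own: chord inequalities for the strictly concave $L$, the endpoint values~\eqref{Lends}, and the representation $L=(\tilde g-g)/(1-y)$ in case (iii). Your arguments for (i), for existence and uniqueness in (ii), and for all of (iii) are correct. Using the maximizer of $L$ rather than of $\tilde g$ in (iii) is even slightly cleaner.

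The displayed inequality
\[
L'(y^\star)<\frac{L(y_C)-L(y^\star)}{y_C-y^\star}
\]
points the wrong way. By your own second tool, the slope of a chord to a point on the right is strictly \emph{smaller} than $L'(y^\star)$, since the tangent lies above a concave graph. For instance, $L(y)=1-y^2$ on $[0,2]$ has its root at $y^\star=1$ with $L'(1)=-2$, while the right chord has slope $-3$. The correct inequality, $L'(y^\star)>L(y_C)/(y_C-y^\star)$, only bounds $L'(y^\star)$ below by a negative number and gives no sign. No argument using concavity together with $L(y_C)<0$ alone can work: the root $y_1^\star$ in your case (iii) also has $L(y_C)<0$ to its right, yet $L'(y_1^\star)>0$.

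What forces the negative slope is the sign at the left endpoint, $L(0)=g_1-g>0$. Comparing with the left chord, which your strict concavity on $[0,y_C]$ permits, gives
\[
L'(y^\star)<\frac{L(y^\star)-L(0)}{y^\star-0}=-\frac{L(0)}{y^\star}<0\,.
\]
This uses only the value $L(0)$ and not $L'(0)$, so it is compatible with your caution about $\delta<1$. It is the paper's argument, phrased there via the mean value theorem on $(0,y^\star)$ and the monotonicity of $L'$. With this correction your proof is complete.
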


\begin{proof}
{\bf (i)} By~\eqref{Lends}, $L(0)>0$ and $L(y_C)>0$ (as $g<g_0$). Writing $y=(1-\theta)y_C$ with $\theta\in[0,1]$, concavity gives
\[
L(y)\ge\theta L(0)+(1-\theta)L(y_C)>0\,,
\]
so that $L>0$ on $[0,y_C]$ and there is no root.

{\bf (ii)} By~\eqref{Lends}, $L(0)>0>L(y_C)$ (as $g_0<g<g_1$), so $L$ has a root $y^\star\in(0,y_C)$. If $0<y_1<y_2<y_C$ were two roots, then writing $y_1=(1-\theta)y_2$ with $\theta\in(0,1)$, concavity would give
\[
0=L(y_1)\ge\theta L(0)+(1-\theta)L(y_2)=\theta L(0)>0\,,
\]
a contradiction, so that the root is unique. Finally, by the Mean Value Theorem there is some $c\in(0,y^\star)$ with
\[
L'(c)=\frac{L(y^\star)-L(0)}{y^\star}=-\frac{L(0)}{y^\star}<0\,,
\]
and since $L'$ is decreasing we have $L'(y^\star)\le L'(c)<0$, so that the corresponding equilibrium $E^\star$ is a saddle.

{\bf (iii)} By~\eqref{Lends}, $L(0)<0$ and $L(y_C)<0$ (as $g>g_1$). If $g^\star=g_1$, then $\tilde g\le g_1<g$, hence $L<0$ on $[0,y_C]$ and there is no interior equilibrium. Assume henceforth $g^\star>g_1$, so that the maximum of $\tilde g$ is attained at an interior point $\hat y\in(0,y_C)$, as $\tilde g(0)=g_1<g^\star$ and $\tilde g(y_C)=g_0<g^\star$. We distinguish three cases.

\emph{(a)} For $g^\star<g<\gamma-n$ one has $\tilde g<g$, hence $L<0$ and there is no interior equilibrium.

\emph{(b)} For $g_1<g<g^\star$,
\[
L(\hat y)=\frac{g^\star-g}{1-\hat y}>0\,,\qquad L(0)<0\,,\qquad L(y_C)<0\,,
\]
so $L$ has roots $y_1^\star\in(0,\hat y)$ and $y_2^\star\in(\hat y,y_C)$. Strict concavity precludes a third root, since three roots $y_1<y_2<y_3$ would force $L(y_2)>0$ by the argument of part~(ii). As $L>0$ on $(y_1^\star,y_2^\star)$, vanishes at the endpoints, and $L'$ is strictly decreasing,
\[
L'(y_1^\star)>0>L'(y_2^\star)\,,
\]
so the corresponding equilibria are a stable node $E_1^\star$ and a saddle $E_2^\star$.

\emph{(c)} For $g=g^\star$, $L\le0$ with equality only at the unique maximizer $y_3^\star$ of $\tilde g$ (unique by strict concavity), where
\[
L(y_3^\star)=0\,,\qquad L'(y_3^\star)=0\,,
\]
the second equality because $y_3^\star$ is an interior maximum. By~\eqref{detJ} and~\eqref{trJ},
\[
\det J(E_3^\star)=0\,,\qquad \operatorname{tr}J(E_3^\star)<0\,.
\]
\end{proof}

Together with Lemma~\ref{lem:stabO} and Theorems~\ref{thm:stabB} and~\ref{thm:stabC}, this yields the  classification summarized in Table~\ref{tab:classification} and illustrated in Figure~\ref{fig:phaseplane}. When $g^\star>g_1$, Section~\ref{sec:bif} shows that the coalescence of $E_1^\star$ and $E_2^\star$ into $E_3^\star$ at $g=g^\star$ is a saddle-node bifurcation (Theorem~\ref{thm:saddlenode} and Proposition~\ref{prop:sn}), illustrated in Figure~\ref{fig:twointerior}. 

\begin{table}[H]
\centering
\small
\setlength{\tabcolsep}{4pt}
\renewcommand{\arraystretch}{1.25}
\begin{tabular}{lcccl}
\toprule
& $O$ & $B$ & $C$ & interior equilibria\\
\midrule
$0<g<g_0$        & unstable node & stable node   & saddle       & none\\
$g=g_0$          & unstable node & stable node   & nonhyperbolic & none\\
$g_0<g<g_1$      & unstable node & stable node   & stable node  & $E^\star$ saddle\\
$g=g_1$          & unstable node & nonhyperbolic & stable node  & $E_2^\star$ saddle\\
$g_1<g<g^\star$   & unstable node & saddle        & stable node  & $E_1^\star$ stable node, $E_2^\star$ saddle\\
$g=g^\star$       & unstable node & saddle        & stable node  & $E_3^\star$ nonhyperbolic\\
$g^\star<g<\gamma-n$ & unstable node & saddle     & stable node  & none\\
\bottomrule
\end{tabular}
\caption{Classification of equilibria across the grazing regimes (Lemma~\ref{lem:stabO} and Theorems~\ref{thm:stabB}, \ref{thm:stabC}, and~\ref{thm:classification}), where the last three rows assume $g^\star>g_1$, since otherwise there is no interior equilibrium for $g>g_1$. At $g=g_0$ and $g=g_1$ the equilibria $C$ and $B$ are nonhyperbolic and undergo transcritical bifurcations, and at $g=g^\star$ the interior equilibria undergo a saddle-node bifurcation (Section~\ref{sec:bif}). For $\delta \in (1,2)$ the counts can differ (Proposition~\ref{prop:nonconcave}).}
\label{tab:classification}
\end{table}

The two canonical crowding exponents, linear ($\delta=1$) and quadratic ($\delta=2$), are worked out explicitly in Subsection~\ref{sec:examples}.

\smallskip

\subsection{The non-concave regime \texorpdfstring{$1<\delta<2$}{1<delta<2}}\label{subsec:nonconcave}

When $1<\delta<2$, \eqref{Lderivs} gives
\[
\lim_{y\to0^+}L''(y)=+\infty\,,
\]
so that $L$ is convex near $y=0$ and the concavity arguments of Subsection~\ref{subsec:classification} no longer apply. We show that $L'''<0$ on $(0,y_C)$ (Lemma~\ref{lem:shape-nonconcave}), which limits the system to at most three interior equilibria and preserves the high-grazing conclusions of Theorem~\ref{thm:classification} (Proposition~\ref{prop:nonconcave}).

\begin{lem}\label{lem:shape-nonconcave}
Assume~\eqref{GA} and $1<\delta<2$. Then $L'''(y)<0$ and $\tilde g'''(y)<0$ for $y\in(0,y_C)$. Consequently $L'$ has at most one interior maximum, hence at most two zeros, and $\tilde g$ has at most one interior local maximum and one interior local minimum.
\end{lem}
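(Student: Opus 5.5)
The plan is to differentiate three times and read off the signs term by term; the consequences then follow from Rolle-type counting. Everything is smooth on $(0,y_C)$ because $y>0$ and $y<y_C<1$.

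\textbf{Step 1: the sign of $L'''$.} Differentiating \eqref{Lderivs} once more gives
\[
L'''(y)=\frac{\gamma\delta(\delta-1)(\delta-2)d\,y^{\delta-3}}{a+b}-\frac{6g}{(1-y)^4}\,.
\]
For $1<\delta<2$ we have $\delta(\delta-1)(\delta-2)<0$, so the first term is negative. The second term is negative since $g>0$. Hence $L'''<0$ on $(0,y_C)$, so $L''$ is strictly decreasing there. It follows that $L'$ has at most one critical point in $(0,y_C)$, and any such point is a strict maximum. Since $L''\to+\infty$ as $y\to0^+$, $L'$ is strictly increasing up to that point (if it exists) and strictly decreasing afterwards; if it does not exist, $L'$ is strictly monotone. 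Either way a strictly unimodal or strictly monotone function has at most two zeros, which proves the assertion about $L'$.

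\textbf{Step 2: the sign of $\tilde g'''$.} Write $\tilde g=(1-y)P(y)$ with
\[
P(y)=\tfrac{\gamma}{a+b}\big(a+m_0+by+dy^{\delta}\big)-n+(a-\gamma)y\,.
\]
By the Leibniz rule, since $(1-y)''=0$,
\[
\tilde g'''=(1-y)P'''-3P''\,.
\]
The relevant derivatives of $P$ are
\[
P''=\tfrac{\gamma\delta(\delta-1)d}{a+b}y^{\delta-2}>0\,,\qquad P'''=\tfrac{\gamma\delta(\delta-1)(\delta-2)d}{a+b}y^{\delta-3}<0\,.
\]
Since $1-y>0$, both summands of $\tilde g'''$ are negative, so $\tilde g'''<0$ on $(0,y_C)$.

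\textbf{Step 3: extrema of $\tilde g$.} Repeating the argument of Step 1, $\tilde g''$ is strictly decreasing and $\tilde g'$ has at most two zeros in $(0,y_C)$. We also have
\[
\tilde g''=(1-y)P''-2P'\,,
\]
which tends to $+\infty$ at $0^+$ because $P''\to+\infty$ while $P'$ stays bounded ($\delta>1$). So $\tilde g'$ first increases and then, possibly, decreases. An interior local extremum of $\tilde g$ requires a sign change of $\tilde g'$. With this shape, the sign pattern of $\tilde g'$ can only be one of the following: $-$, $+$, $-\to+$, $+\to-$, or $-\to+\to-$. A tangential zero gives no extremum. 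Reading off the patterns, there is at most one upward crossing (a local minimum) and at most one downward crossing (a local maximum), and when both occur the minimum lies to the left of the maximum.

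\textbf{Main obstacle.} The computations are routine. The delicate point is the bookkeeping in Step 3: one must make sure that a double zero of $\tilde g'$ is not counted as an extremum, and that the unimodality of $\tilde g'$ really forces at most one extremum of each type. The key tool here is the boundary behaviour $\tilde g''(0^+)=+\infty$, which fixes the increasing-then-decreasing shape of $\tilde g'$.
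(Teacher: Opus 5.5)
Your proposal is correct and follows the paper's proof. You use the same third-derivative formulas, and you count extrema the same way, through the strict monotonicity of $L''$ (with $L''(0^+)=+\infty$) and the strict concavity of $\tilde g'$. Writing $\tilde g=(1-y)P$ instead of the paper's $\tilde g=(1-y)L+g$ simply avoids the $g$-terms that cancel in the paper's substitution. Your explicit sign-pattern bookkeeping for $\tilde g'$ fills in a step the paper leaves implicit, although strict concavity of $\tilde g'$ alone already suffices there, without $\tilde g''(0^+)=+\infty$.
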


\begin{proof}
By~\eqref{Lderivs},
\[
L'''(y)=\frac{\gamma\delta(\delta-1)(\delta-2)d}{a+b}y^{\delta-3}-\frac{6g}{(1-y)^4}\,,
\]
and for $1<\delta<2$ both terms are negative, since $\delta(\delta-1)>0$ and $\delta-2<0$, and hence $L'''<0$. Moreover $\tilde g(y)=(1-y)L(y)+g$, so that $\tilde g'''=-3L''+(1-y)L'''$, and substituting~\eqref{Lderivs} gives
\[
\tilde g'''(y)=\frac{\gamma\delta(\delta-1)d}{a+b}y^{\delta-3}\big[(\delta-2)-(\delta+1)y\big]<0\,,\qquad y\in(0,y_C)\,,
\]
the bracket being negative because $\delta-2<0$. As $L'''<0$, $L''$ is strictly decreasing, and since $L''(0^+)=+\infty$ it has at most one zero in $(0,y_C)$, so $L'$ has at most one interior maximum and hence at most two zeros. As $\tilde g'''<0$, $\tilde g'$ is strictly concave and so has at most two zeros, whence $\tilde g$ has at most one interior local maximum and one interior local minimum.
\end{proof}

\begin{prop}\label{prop:nonconcave}
Assume~\eqref{GA} and $1<\delta<2$. Then~\eqref{dxdy} has at most three interior equilibria $(K(y^\star),y^\star)$, each a stable node or a saddle according as $L'(y^\star)>0$ or $L'(y^\star)<0$ (Subsection~\ref{subsec:int-stab}). For $g>g_1$ there are at most two, and when two exist the one of smaller coral cover is a stable node and the other a saddle. The number of simple interior equilibria is even for $g<g_0$ and for $g>g_1$, and odd for $g_0<g<g_1$.
\end{prop}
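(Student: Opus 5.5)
The plan is to derive everything from Lemma~\ref{lem:shape-nonconcave} by Rolle-type counting, together with the endpoint values~\eqref{Lends} and the stability criterion of Subsection~\ref{subsec:int-stab}.

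\emph{Step 1: at most three interior equilibria.} Since $L'$ has at most two zeros in $(0,y_C)$, Rolle's theorem shows that $L$ has at most three zeros there. By Lemma~\ref{lem:inOmega} and the reduction of Subsection~\ref{subsec:reduced}, these zeros are in bijection with the interior equilibria. The classification into stable node or saddle according to the sign of $L'(y^\star)$ is exactly~\eqref{trJ}--\eqref{detJ}, so nothing new is needed there.

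\emph{Step 2: the case $g>g_1$.} Here I would work with $\tilde g$ and count solutions of $\tilde g(y)=g$. Since $\tilde g(0)=g_1<g$ and $\tilde g(y_C)=g_0<g$, every root lies where $\tilde g$ exceeds both endpoint values.

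The key point is the shape of $\tilde g$ near $0$. We have $\tilde g'(y)=-L(y)+(1-y)L'(y)$, and $L'(0^+)$ is finite because $\delta>1$. Because $\tilde g'''<0$, the derivative $\tilde g'$ is strictly concave. Its zeros therefore split $(0,y_C)$ into at most three intervals, with sign pattern a subsequence of $(-,+,-)$. So $\tilde g$ first decreases (possibly), then increases, then decreases.

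\begin{itemize}
\item If $\tilde g$ initially decreases, it reaches an interior local minimum below $g_1<g$. After that it increases to a local maximum and decreases to $g_0<g$, so the level $g$ is crossed at most twice.
\item If $\tilde g$ is unimodal, or monotone, the same bound follows directly.
\end{itemize}

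Hence there are at most two roots. When there are exactly two simple roots $y_1^\star<y_2^\star$, we have $\tilde g>g$ between them and $\tilde g<g$ outside. This gives $\tilde g'(y_1^\star)>0>\tilde g'(y_2^\star)$, so the root with smaller coral cover is a stable node and the other is a saddle, using the sign equivalence between $L'$ and $\tilde g'$ at roots.

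The delicate point is excluding a third crossing when $\tilde g$ has the shape min-then-max. This is handled by the observation that the minimum value is below $\tilde g(0)=g_1<g$. If $g$ is such that only one root exists, or a nonsimple one, the statement still holds since it only asserts "at most two."

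\emph{Step 3: parity.} For simple roots, $L$ changes sign at each of them. The number of sign changes of a continuous function on $[0,y_C]$ with nonzero endpoint values has the parity determined by $\operatorname{sign}L(0)\cdot\operatorname{sign}L(y_C)$. By~\eqref{Lends}:

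\begin{itemize}
\item for $g<g_0$, both endpoint values are positive, so the count is even;
\item for $g>g_1$, both are negative, so the count is even;
\item for $g_0<g<g_1$, $L(0)>0>L(y_C)$, so the count is odd.
\end{itemize}

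One caveat needs care: the parity statement is meant for the case where all roots are simple. If a double root is present, which is a tangency, it does not change sign, so the count of \emph{simple} roots still has the stated parity. A double root does not flip the sign, while a triple root does flip it. However, a triple root would force $L'$ to have a double zero, and hence $L''$ to vanish there with $L'$ at an extremum. I would check that this is compatible with the counting, or simply state that the parity claim holds after counting roots by sign change. I expect this bookkeeping to be the main subtlety rather than a real obstacle.
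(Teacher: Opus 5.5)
Your Steps~1 and~3 match the paper's proof: Rolle's theorem applied to $L$ with $L'$ having at most two zeros, and the parity read off from the endpoint signs in~\eqref{Lends}. Step~2 takes a different route.

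\emph{The paper's route.} It stays with $L$ and uses the first half of Lemma~\ref{lem:shape-nonconcave}. Three roots $y_1<y_2<y_3$ would put the only two zeros of $L'$ in $(y_1,y_2)$ and $(y_2,y_3)$. Since $L''$ decreases strictly from $+\infty$, $L'$ increases and then decreases, so $L'<0$ on $(0,y_1)$. Then $L(0)>L(y_1)=0$, which contradicts $g>g_1$.

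\emph{Your route.} You use the second half, $\tilde g'''<0$, to get the decreasing--increasing--decreasing shape of $\tilde g$. You then count crossings of the level $g$, noting that the initial decreasing branch lies below $g_1<g$.

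Both are correct. The paper's is a shorter contradiction. Yours gives the level-set picture directly and yields a little more. Two roots for $g>g_1$ force the local maximum of $\tilde g$ to exceed $g$. The roots then lie in the open branches where $\tilde g'>0$ and $\tilde g'<0$, so they are automatically simple. The qualifier ``simple'' that both you and the paper add there is therefore unnecessary.

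Your parity caveat is well placed, but it is not mere bookkeeping, and checking compatibility will fail. $L''$ has at most one zero, and it is simple because $L'''<0$. Hence $L$ has at most three roots counted with multiplicity. So the only non-simple sign-changing root is a triple root; it must then be the unique root, and it can only occur for $g_0<g<g_1$.

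This does happen under~\eqref{GA}. Keep the other parameters of Figure~\ref{fig:threeinterior} and vary $d$:
\begin{itemize}
\item $\tilde g'(0)=\tfrac{\gamma b}{a+b}+a-\gamma-g_1<0$ is independent of $d$, and $\tilde g'(y_C)$ stays negative as well.
\item For small $d$, $\tilde g$ is strictly decreasing.
\item At the figure's value of $d$, $\tilde g$ has an interior minimum and maximum.
\end{itemize}
So at some intermediate $d$ (numerically about $0.06$) the strictly concave $\tilde g'$ has an interior double zero $y_c$. At $g=\tilde g(y_c)\in(g_0,g_1)$, $L$ has the single root $y_c$ with $L=L'=L''=0$. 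That equilibrium is nonhyperbolic, so there are no simple interior equilibria, even though the statement asserts an odd number.

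What your argument proves is the parity of sign-changing roots. That equals the parity of simple roots whenever $L$ has no triple root. The paper's proof proves the same thing, since it silently calls sign-changing zeros ``simple''. State your fallback formulation explicitly rather than trying to verify the literal claim.
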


\begin{proof}
{\bf (a)} By Lemma~\ref{lem:shape-nonconcave}, $L'$ has at most two zeros in $(0,y_C)$, so by Rolle's theorem $L$ has at most three roots there. As the interior equilibria are exactly the points $(K(y^\star),y^\star)$ with $y^\star\in(0,y_C)$ a root of $L$ (Subsection~\ref{subsec:reduced} and Lemma~\ref{lem:inOmega}), there are at most three of them.

{\bf (b)} For $g>g_1$, \eqref{Lends} gives $L(0)=g_1-g<0$. Were $L$ to have three roots $y_1<y_2<y_3$, Rolle would yield two zeros of $L'$, which by Lemma~\ref{lem:shape-nonconcave} are its only zeros. As $L'$ has a single interior maximum, $L'<0$ on $(0,y_1)$, whence $L(0)>L(y_1)=0$, contradicting $L(0)<0$. Hence at most two roots remain. For two simple roots $y_1<y_2$ the endpoint signs $L(0),L(y_C)<0$ force
\[
L<0 \ \text{ on }\ (0,y_1)\cup(y_2,y_C)\,,\qquad L>0 \ \text{ on }\ (y_1,y_2)\,,
\]
so that
\[
L'(y_1)>0>L'(y_2)\,,
\]
and hence $(K(y_1),y_1)$ is a stable node and $(K(y_2),y_2)$ a saddle.

{\bf (c)} By~\eqref{Lends},
\[
\sign L(0)=\sign(g_1-g)\,,\qquad \sign L(y_C)=\sign(g_0-g)\,.
\]
A continuous function has an even or odd number of sign-changing zeros depending on whether its endpoint values agree or differ in sign. Since $g_0<g_1$, these agree for $g<g_0$, differ for $g_0<g<g_1$, and agree for $g>g_1$, so the number of simple interior equilibria is even, odd, and even in these three regimes.
\end{proof}

For $g_0<g<g_1$ there can be three interior equilibria, a tristable regime illustrated in Figure~\ref{fig:threeinterior}.

\begin{rem}\label{rem:nonconcave}
By Proposition~\ref{prop:nonconcave}(a),(b), the high-grazing conclusion of Theorem~\ref{thm:classification}(iii) remains valid for $1<\delta<2$: for $g>g_1$ there are at most two interior equilibria, a stable node and a saddle. For smaller $g$ the convexity of $L$ near $y=0$ can give the strictly concave $\tilde g'$ two interior zeros, so $\tilde g$ has an interior local minimum, and a second pair of interior equilibria can raise the count to three over a narrow range of $g$.
When $\tilde g$ is monotone on $[0,y_C]$, it can only decrease, since $\tilde g(0)=g_1$ exceeds $\tilde g(y_C)=g_0$ (Lemma~\ref{lem:ordering}), so $g^\star=g_1$, leaving at most one interior equilibrium. A parameter set realizing three coexisting interior equilibria is examined in Subsection~\ref{sec:examples}.
\end{rem}

\bigskip

\section{Bifurcation Analysis}\label{sec:bif}
We now describe the bifurcations through which the equilibria of~\eqref{dxdy} exchange stability and are created or destroyed as the grazing intensity $g$ varies. The bifurcation diagrams of Figures~\ref{fig:twointerior} and~\ref{fig:threeinterior} display these bifurcations. By Section~\ref{sec:boundary} the boundary equilibria $B$ and $C$ are nonhyperbolic at $g=g_1$ and $g=g_0$, and by Section~\ref{sec:interior} the two interior equilibria of the high-grazing regime coalesce into the nonhyperbolic equilibrium $E_3^\star$ at $g=g^\star$. We show that the former undergo transcritical bifurcations and the latter a saddle-node bifurcation. We write system~\eqref{dxdy} as
\begin{equation}\label{Fmatrix}
\dot{\mathbf X}=\mathbf F(\mathbf X,g)\,,\qquad \mathbf X=(x,y)^\top\,,\quad \mathbf F(\mathbf X,g)=\begin{pmatrix}xF(x,y)\\ yG(x,y)\end{pmatrix}\,,
\end{equation}
and invoke Sotomayor's theorem~\cite[Sec.~4.2]{Pe01}: if $\mathbf F(\mathbf X_0,g^\dagger)=0$
and $D\mathbf F(\mathbf X_0,g^\dagger)$ has a simple zero eigenvalue with right eigenvector $\mathbf U$ and left eigenvector $\mathbf W$, then at $g=g^\dagger$ the system undergoes a {\it transcritical} bifurcation if
\begin{equation}\label{sotoT}
\mathbf W^\top\mathbf F_g(\mathbf X_0,g^\dagger)=0\,,\quad \mathbf W^\top\big(D\mathbf F_g(\mathbf X_0,g^\dagger)\mathbf U\big)\neq0\,,\quad \mathbf W^\top\big(D^2\mathbf F(\mathbf X_0,g^\dagger)(\mathbf U,\mathbf U)\big)\neq0\,,
\end{equation}
and a {\it saddle-node} bifurcation if
\begin{equation}\label{sotoSN}
\mathbf W^\top\mathbf F_g(\mathbf X_0,g^\dagger)\neq0\,,\quad \mathbf W^\top\big(D^2\mathbf F(\mathbf X_0,g^\dagger)(\mathbf U,\mathbf U)\big)\neq0\,.
\end{equation}
Here $\mathbf F_g=\partial_g\mathbf F$, $D\mathbf F_g$ is its Jacobian in $\mathbf X$, and $D^2\mathbf F(\mathbf U,\mathbf U)=\big(\mathbf U^\top(D^2F_1)\mathbf U, \mathbf U^\top(D^2F_2)\mathbf U\big)^\top$ with $D^2F_i$ the Hessian of the $i$-th component.

\subsection{Transcritical bifurcations at the boundary equilibria}\label{subsec:bif-boundary}

\begin{thm}\label{thm:transcritB}
Suppose~\eqref{GA} holds and $\delta\in\N$. If
\begin{equation}\label{ndB}
g_1\neq a-\gamma+\tfrac{\gamma}{a+b}\big(b+d\mathds 1_{\{\delta=1\}}\big)\,,
\end{equation}
then system~\eqref{dxdy} undergoes a transcritical bifurcation at $B$ as $g$ crosses $g_1$.
\end{thm}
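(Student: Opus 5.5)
The plan is to apply Sotomayor's theorem \eqref{sotoT} at $\mathbf X_0=B=(x_B,0)$ with $g^\dagger=g_1$. There is one subtlety, and handling it is the main obstacle. Applied naively in the coordinates $(x,y)$, the transversality condition degenerates. Indeed $\mathbf F_g=\big(-x/(1-y),\,0\big)^\top$, and its Jacobian has zero second row. With the left eigenvector $\mathbf W=(0,1)^\top$ computed below, this gives $\mathbf W^\top D\mathbf F_g\mathbf U=0$. The reason is that the equilibrium $B$ itself moves with $g$, since $x_B=1-(g+n)/\gamma$. I would therefore first pass to coordinates centred at the moving equilibrium, $u:=x-x_B(g)$, and keep $y$. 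In these coordinates the family is $\dot u=(u+x_B(g))F(u+x_B(g),y)$ and $\dot y=yG(u+x_B(g),y)$, and the point $(u,y)=(0,0)$ is an equilibrium for every $g$. Sotomayor's conditions are then checked for this family. The linear part at $(0,0)$ coincides with $J(x_B,0)$ from the proof of Theorem~\ref{thm:stabB}, so the eigenvectors are unchanged.

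\emph{Step 1: eigenvectors.} At $g=g_1$ the matrix $J(x_B,0)$ is upper triangular with diagonal entries $\lambda_1=-(\gamma-g_1-n)<0$ and $\lambda_2=0$, so the zero eigenvalue is simple. Its second row vanishes, so the left eigenvector is $\mathbf W=(0,1)^\top$. Solving the first row gives the right eigenvector
\[
\mathbf U=\Big(\tfrac{x_B(a-\gamma-g_1)}{\gamma-g_1-n},\,1\Big)^\top=\Big(\tfrac{a-\gamma-g_1}{\gamma},\,1\Big)^\top,
\]
where the simplification uses $x_B=(\gamma-g_1-n)/\gamma$.

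\emph{Step 2: the first two conditions.} In the shifted coordinates the $g$-derivative of the second component is $y\,\partial_xG\,x_B'(g)=y(a+b)/\gamma$. This vanishes at $y=0$, so $\mathbf W^\top\mathbf F_g=0$. Its gradient in $(u,y)$ at the origin is $(0,(a+b)/\gamma)$, which gives $\mathbf W^\top D\mathbf F_g\mathbf U=(a+b)/\gamma\neq0$.

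\emph{Step 3: the quadratic condition.} Only the Hessian of the second component $yG$ at $(x_B,0)$ is needed. Its entries are $\partial_{xx}(yG)=0$, $\partial_{xy}(yG)=-(a+b)$, and $\partial_{yy}(yG)=2\partial_yG+y\partial_{yy}G$. The hypothesis $\delta\in\N$ enters here. It makes $y^{\delta+1}$ a polynomial, so $\mathbf F$ is smooth (at least $\mathrm C^2$) up to $y=0$. It also gives $\partial_{yy}(yG)|_{y=0}=-2(b+d\mathds 1_{\{\delta=1\}})$, because the term $\delta d y^{\delta-1}$ survives at $y=0$ only when $\delta=1$. Hence
\[
\mathbf W^\top D^2\mathbf F(\mathbf U,\mathbf U)=-2\Big[(a+b)\tfrac{a-\gamma-g_1}{\gamma}+b+d\mathds 1_{\{\delta=1\}}\Big].
\]
This is nonzero exactly when $g_1\neq a-\gamma+\tfrac{\gamma}{a+b}(b+d\mathds 1_{\{\delta=1\}})$, which is the hypothesis \eqref{ndB}.

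With all three conditions in \eqref{sotoT} verified, Sotomayor's theorem yields the transcritical bifurcation at $B$ as $g$ crosses $g_1$. This is consistent with the exchange of stability in Theorem~\ref{thm:stabB} and with the interior equilibrium $E_1^\star$ or $E^\star$ emerging from $B$ through the root of $L$ near $y=0$.
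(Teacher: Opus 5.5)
Your proposal is correct and matches the paper's proof step for step. The paper also centres $B$ at the origin via $X=x-x_B(g)$, $Y=y$, uses the same $\mathbf W=(0,1)^\top$ and $\mathbf U$, obtains $\mathbf W^\top D\mathbf F_g\mathbf U=(a+b)/\gamma$, and reduces the quadratic condition to the Hessian of the second component, which vanishes exactly when \eqref{ndB} fails. Your simplification $U_1=(a-\gamma-g_1)/\gamma$ via $\lambda_1=-\gamma x_B$, and your explanation of why the uncentred coordinates make the transversality condition degenerate, are small additions that make the equivalence with \eqref{ndB} more transparent.
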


\begin{proof}
Center $B$ at the origin by $X=x-x_B$, $Y=y$, where $x_B=1-\tfrac{g+n}{\gamma}$ depends on $g$, so that the origin is an equilibrium for every $g$. In these coordinates~\eqref{Fmatrix} reads $\dot{\mathbf X}=\mathbf F(\mathbf X,g)$ with $\mathbf X=(X,Y)^\top$ and
\begin{align*}
\mathbf F(\mathbf X,g)=\begin{pmatrix}F_1(\mathbf X,g)\\ F_2(\mathbf X,g)\end{pmatrix}
&=\begin{pmatrix}(X+x_B)\,F(X+x_B,Y)\\ Y\,G(X+x_B,Y)\end{pmatrix}\\[2pt]
&=\begin{pmatrix}(X+x_B)\big[\gamma(1-X-x_B)-n+(a-\gamma)Y-\tfrac{g}{1-Y}\big]\\[4pt]
Y\big[-(a+m_0)+\tfrac{(a+b)(g+n)}{\gamma}-(a+b)X-bY-dY^{\delta}\big]\end{pmatrix}\,,
\end{align*}
the lower entry written using this value of $x_B$.
For $\delta\in\N$ the component $F_2$ is a polynomial in $(X,Y)$, so $\mathbf F$ is smooth near the origin. At $g=g_1$ one has $\tfrac{(a+b)(g_1+n)}{\gamma}=a+m_0$, so
\[
D\mathbf F(\mathbf 0,g_1)=\begin{pmatrix}\lambda_1 & x_B(a-\gamma-g_1)\\ 0 & 0\end{pmatrix}\,,\qquad \lambda_1=-(\gamma-g_1-n)<0\,,
\]
and the zero eigenvalue is simple, with right and left eigenvectors
\[
\mathbf U=\Big(-\tfrac{x_B(a-\gamma-g_1)}{\lambda_1},1\Big)^\top\,,\qquad \mathbf W=(0,1)^\top\,.
\]
As $\mathbf W=(0,1)^\top$, the conditions~\eqref{sotoT} involve only $F_2$. From $\partial_g F_2(\mathbf X,g)=\tfrac{a+b}{\gamma}Y$,
\[
\mathbf W^\top\mathbf F_g(\mathbf 0,g_1)=0\,,\qquad
\mathbf W^\top\big(D\mathbf F_g(\mathbf 0,g_1)\mathbf U\big)=DF_{2,g}(\mathbf 0,g_1)\mathbf U=\tfrac{a+b}{\gamma}\neq0\,.
\]
Since $F_2(\cdot,\cdot,g_1)=-(a+b)XY-bY^2-dY^{\delta+1}$, its Hessian at the origin is
\[
D^2F_2(\mathbf 0,g_1)=\begin{pmatrix}0&-(a+b)\\-(a+b)&-2(b+d\mathds 1_{\{\delta=1\}})\end{pmatrix}\,,
\]
and therefore
\[
\mathbf W^\top\big(D^2\mathbf F(\mathbf 0,g_1)(\mathbf U,\mathbf U)\big)=\mathbf U^\top D^2F_2(\mathbf 0,g_1)\mathbf U=2\Big(\tfrac{(a+b)x_B(a-\gamma-g_1)}{\lambda_1}-\big(b+d\mathds 1_{\{\delta=1\}}\big)\Big)\,,
\]
which vanishes only for $g_1=a-\gamma+\tfrac{\gamma}{a+b}(b+d\mathds 1_{\{\delta=1\}})$, excluded by hypothesis. Hence~\eqref{sotoT} holds, and as the centering is a smooth change of coordinates, \eqref{dxdy} undergoes a transcritical bifurcation at $B$.
\end{proof}

\begin{rem}
The restriction $\delta\in\N$ guarantees that $Y^{\delta+1}$, and thus $F_2$, is smooth on a full neighborhood of $Y=0$, including $Y<0$, as Sotomayor's theorem requires, and the crowding coefficient $d$ enters the nondegeneracy condition~\eqref{ndB} only when $\delta=1$.
\end{rem}

\begin{thm}\label{thm:transcritC}
Suppose~\eqref{GA} holds. If
\begin{equation}\label{ndC}
g_0\neq(1-y_C)^2\Big(a-\gamma+\tfrac{\gamma(b+\delta dy_C^{\delta-1})}{a+b}\Big)\,,
\end{equation}
then for every $\delta>0$ system~\eqref{dxdy} undergoes a transcritical bifurcation at $C$ as $g$ crosses $g_0$.
\end{thm}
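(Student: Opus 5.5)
The plan is to mirror the proof of Theorem~\ref{thm:transcritB} and apply Sotomayor's conditions~\eqref{sotoT} at $C$, with the roles of the coordinates exchanged. Since $y_C$ does not depend on $g$, we center by $X=x$, $Y=y-y_C$; then the origin is an equilibrium for every $g$, and $g$ enters only through $F$.

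A key simplification is that near $C$ we have $y\approx y_C>0$. Hence $y^\delta$ is smooth on a full neighbourhood of $C$ (including $X<0$) for every $\delta>0$. The factor $1/(1-y)$ is also smooth there, since $y_C<1$. This is why no integrality restriction on $\delta$ is needed, in contrast to Theorem~\ref{thm:transcritB}.

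\textbf{Step 1: the simple zero eigenvalue.} By Theorem~\ref{thm:stabC}, at $g=g_0$ the Jacobian is lower triangular:
\[
J=\begin{pmatrix}0&0\\-(a+b)y_C&\mu_2\end{pmatrix},\qquad \mu_2=-by_C-\delta dy_C^{\delta}<0 .
\]
So the zero eigenvalue is simple. The left eigenvector is $\mathbf W=(1,0)^\top$, and the right eigenvector is $\mathbf U=(1,\,(a+b)y_C/\mu_2)^\top$, obtained by solving $-(a+b)y_C+\mu_2u=0$. Write $u:=(a+b)y_C/\mu_2<0$ for its second component.

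\textbf{Step 2: the first two Sotomayor conditions.} Only $F_1=xF(x,y)$ is relevant. We have $\partial_gF_1=-x/(1-y)$, which vanishes at $X=0$. Its gradient at the origin is $(-1/(1-y_C),0)$, so
\[
\mathbf W^\top D\mathbf F_g\mathbf U=-\tfrac{1}{1-y_C}\neq0 .
\]

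\textbf{Step 3: the quadratic condition.} The Hessian of $xF$ at $x=0$ has entries $\partial_{xx}=2\partial_xF=-2\gamma$, $\partial_{xy}=\partial_yF(0,y_C)$ and $\partial_{yy}=0$. Therefore
\[
\mathbf U^\top D^2F_1\mathbf U=-2\gamma+2u\,\partial_yF(0,y_C),\qquad \partial_yF(0,y_C)=a-\gamma-\frac{g_0}{(1-y_C)^2}.
\]
This quantity vanishes iff
\[
\gamma\mu_2=(a+b)y_C\Big(a-\gamma-\tfrac{g_0}{(1-y_C)^2}\Big).
\]
Using $\mu_2=-y_C(b+\delta dy_C^{\delta-1})$ and dividing by $-(a+b)y_C$, this becomes
\[
\frac{\gamma(b+\delta dy_C^{\delta-1})}{a+b}=\frac{g_0}{(1-y_C)^2}-(a-\gamma),
\]
which is exactly the equality excluded by~\eqref{ndC}. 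So~\eqref{sotoT} holds, and Sotomayor's theorem gives the transcritical bifurcation; the centering is a smooth translation.

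The main obstacle is bookkeeping in Step~3: the sign of $u$ and the factor $y_C$ must cancel correctly so that the degenerate case matches~\eqref{ndC} precisely. One also has to justify smoothness near $C$, which rests on $y_C\in(0,1)$.
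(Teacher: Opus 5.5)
Your proof is correct and follows the paper's argument essentially step for step. You use the same left eigenvector $\mathbf W=(1,0)^\top$, and the same right eigenvector up to the rescaling $\mathbf U\mapsto\frac{(a+b)y_C}{\mu_2}\mathbf U$. You check the conditions in~\eqref{sotoT} in the same way, via $\mathbf F_g=(-x/(1-y),0)^\top$ and the Hessian of $xF$ at $x=0$, and the degenerate case reduces to exactly the equality excluded by~\eqref{ndC}. The translation $Y=y-y_C$ is harmless but not needed, since $C$ does not depend on $g$. Your explicit remark that $y^{\delta}$ and $1/(1-y)$ are smooth near $C$ because $0<y_C<1$ usefully justifies the ``for every $\delta>0$'' claim, which the paper's proof leaves implicit.
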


\begin{proof}
Since $C=(0,y_C)$ is independent of $g$, we work directly with~\eqref{Fmatrix}. At $g=g_0$ one has $\mu_1=F(0,y_C)=0$, so
\[
D\mathbf F(C,g_0)=\begin{pmatrix}0&0\\-(a+b)y_C&\mu_2\end{pmatrix}\,,\qquad \mu_2=-by_C-\delta dy_C^{\delta}<0\,,
\]
and the zero eigenvalue is simple, with
\[
\mathbf U=\Big(\tfrac{\mu_2}{(a+b)y_C},1\Big)^\top\,,\qquad \mathbf W=(1,0)^\top\,.
\]
As $\mathbf F_g(\mathbf X,g)=\big(-\tfrac{x}{1-y},0\big)^\top$ vanishes at $C$ (where $x=0$), $\mathbf W^\top\mathbf F_g(C,g_0)=0$, while
\[
\mathbf W^\top\big(D\mathbf F_g(C,g_0)\mathbf U\big)=-\tfrac{U_1}{1-y_C}=-\tfrac{\mu_2}{(a+b)y_C(1-y_C)}>0\,.
\]
Since $\mathbf W=(1,0)^\top$, the last condition involves only $F_1(\mathbf X,g)=xF(x,y)$, with Hessian
\[
D^2F_1(C,g_0)=\begin{pmatrix}-2\gamma & (a-\gamma)-\tfrac{g_0}{(1-y_C)^2}\\[2pt] (a-\gamma)-\tfrac{g_0}{(1-y_C)^2} & 0\end{pmatrix}\,,
\]
so that
\[
\mathbf W^\top\big(D^2\mathbf F(C,g_0)(\mathbf U,\mathbf U)\big)=\mathbf U^\top D^2F_1(C,g_0)\mathbf U=2U_1\Big(a-\gamma-\tfrac{g_0}{(1-y_C)^2}-\gamma U_1\Big)\,.
\]
Using $U_1=-\tfrac{b+\delta dy_C^{\delta-1}}{a+b}$, this vanishes only for $g_0=(1-y_C)^2\big(a-\gamma+\tfrac{\gamma(b+\delta dy_C^{\delta-1})}{a+b}\big)$, excluded by hypothesis. Thus~\eqref{sotoT} holds and \eqref{dxdy} undergoes a transcritical bifurcation at $C$.
\end{proof}

\begin{rem}
At $g=g_0$ both the macroalgal nullcline endpoint $A=(0,y_A)$ of~\eqref{yA} and the interior equilibrium $E^\star$ coincide with the coral-only state $C=(0,y_C)$: there $y_A=y_C$, since $g_0$ is the value at which $F(0,y_C)=0$, while $L(y_C)=0$ by~\eqref{Lends} and $K(y_C)=0$, so that $E^\star=(K(y_C),y_C)=(0,y_C)=C$. This crossing of the interior equilibrium with the boundary is the transcritical bifurcation of Theorem~\ref{thm:transcritC}.
\end{rem}

\smallskip 

\subsection{Saddle-node bifurcation of interior equilibria}\label{subsec:bif-interior}

\begin{thm}\label{thm:saddlenode}
Suppose~\eqref{GA} holds, that $L$ is strictly concave on $(0,y_C]$ (e.g.\ $0<\delta\le1$, or $\delta\ge2$ with~\eqref{concbound}), and that $g^\star>g_1$. Then for every $\delta>0$ system~\eqref{dxdy} undergoes a saddle-node bifurcation at the interior equilibrium $E_3^\star=(x_3^\star,y_3^\star)$ as $g$ crosses $g^\star$.
\end{thm}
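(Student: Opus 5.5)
We verify the two conditions~\eqref{sotoSN} of Sotomayor's theorem at $\mathbf X_0=E_3^\star$, $g^\dagger=g^\star$. By Theorem~\ref{thm:classification}(iii)(c), $E_3^\star$ lies in $\Omega$, so $x_3^\star,y_3^\star>0$ and $\mathbf F$ is smooth there for every $\delta>0$. This is why no integrality restriction on $\delta$ is needed, in contrast with Theorem~\ref{thm:transcritB}. We also know $L(y_3^\star)=L'(y_3^\star)=0$. Hence $\det J(E_3^\star)=0$ by~\eqref{detJ} and $\operatorname{tr}J(E_3^\star)<0$ by~\eqref{trJ}, so the zero eigenvalue is simple.

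\emph{Eigenvectors.} Using~\eqref{eq:Jacobi_iE}, write $J=\begin{pmatrix}-\gamma x & x F_y\\ -(a+b)y & y G_y\end{pmatrix}$ at $E_3^\star$. From the first row, a right null vector is $\mathbf U=(F_y,\gamma)^\top$. Alternatively, and more geometrically, it is the tangent $(K'(y_3^\star),1)^\top$ to the coral nullcline; the two agree up to scaling precisely because $L'(y_3^\star)=0$, which says that the nullclines $F=0$ and $G=0$ are tangent there. For a left null vector I take $\mathbf W=\big((a+b)y,\,-\gamma x\big)^\top$, which annihilates the first column. Its annihilation of the second column is again equivalent to $L'=0$.

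\emph{First condition.} We have $\mathbf F_g=\big(-\tfrac{x}{1-y},0\big)^\top$, so
\[
\mathbf W^\top\mathbf F_g(E_3^\star,g^\star)=-\frac{(a+b)x_3^\star y_3^\star}{1-y_3^\star}\neq0 .
\]

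\emph{Second condition.} This is the main obstacle: computing the Hessians of $xF$ and $yG$ and showing that $\mathbf W^\top D^2\mathbf F(\mathbf U,\mathbf U)\neq0$. Rather than expand the terms blindly, I would exploit the fact that $F=G=0$ at $E_3^\star$. This gives $D^2(xF)(\mathbf U,\mathbf U)=2U_1\,DF\,\mathbf U+x\,D^2F(\mathbf U,\mathbf U)$, and similarly for $yG$. With $\mathbf U$ tangent to both nullclines we have $DF\,\mathbf U=DG\,\mathbf U=0$, so the expression reduces to
\[
(a+b)xy\,\big[D^2F(\mathbf U,\mathbf U)\big]-\gamma xy\,\big[D^2G(\mathbf U,\mathbf U)\big].
\]
Normalize $\mathbf U=(K'(y),1)$, so that $G(K(y),y)\equiv0$. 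Differentiating twice along the curve gives $D^2G(\mathbf U,\mathbf U)+\partial_xG\,K''=0$, i.e.\ $D^2G(\mathbf U,\mathbf U)=(a+b)K''$. Likewise, $L''=D^2F(\mathbf U,\mathbf U)+\partial_xF\,K''=D^2F(\mathbf U,\mathbf U)-\gamma K''$. Substituting,
\[
\mathbf W^\top D^2\mathbf F(\mathbf U,\mathbf U)=(a+b)x_3^\star y_3^\star\big[L''(y_3^\star)+\gamma K''-\gamma K''\big]=(a+b)x_3^\star y_3^\star L''(y_3^\star).
\]
This is nonzero by the strict concavity hypothesis (Lemma~\ref{lem:concavity}).

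I would double-check the cancellation carefully. If the bookkeeping fails, the fallback is to expand the Hessians explicitly using~\eqref{partials} and~\eqref{Lderivs}, and then match the result against $L''$. With both conditions in~\eqref{sotoSN} verified, Sotomayor's theorem yields the saddle-node bifurcation. Finally, consistency with Theorem~\ref{thm:classification}(iii) confirms the direction: two equilibria exist for $g<g^\star$ and none for $g>g^\star$.
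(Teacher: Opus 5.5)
Your proposal is correct and follows the paper's route: Sotomayor's theorem at $E_3^\star$ with the same eigenvectors (your $\mathbf W$ is the paper's scaled by $(a+b)y_3^\star>0$) and the same transversality computation. In both, the second condition reduces to a positive multiple of $L''(y_3^\star)$. Your product-rule and differentiate-along-the-nullcline argument ($D\!F\,\mathbf U=D G\,\mathbf U=0$, $D^2G(\mathbf U,\mathbf U)=(a+b)K''$, $D^2F(\mathbf U,\mathbf U)=L''+\gamma K''$) is only a cleaner bookkeeping of the paper's explicit Hessian expansion, and the cancellation you flagged does check out, giving $(a+b)x_3^\star y_3^\star L''(y_3^\star)\neq0$.
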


\begin{proof}
By Theorem~\ref{thm:classification}(iii)(c), at $g=g^\star$ the equilibrium $E_3^\star$ satisfies $\det J(E_3^\star)=0$ and $\operatorname{tr}J(E_3^\star)<0$, so its zero eigenvalue is simple. Using $L'(y_3^\star)=0$, equivalently $\partial_yF(E_3^\star)=-\tfrac{\gamma(b+\delta d(y_3^\star)^{\delta-1})}{a+b}=\gamma U_1$, the eigenvectors are
\[
\mathbf U=\Big(-\tfrac{b+\delta d(y_3^\star)^{\delta-1}}{a+b},1\Big)^\top\,,\qquad \mathbf W=\Big(1,-\tfrac{\gamma x_3^\star}{(a+b)y_3^\star}\Big)^\top\,.
\]
Then $\mathbf W^\top\mathbf F_g(E_3^\star,g^\star)=-\tfrac{x_3^\star}{1-y_3^\star}\neq0$, so the bifurcation is not transcritical. For the second condition, the Hessians of $F_1(\mathbf X,g)=xF(x,y)$ and $F_2(\mathbf X,g)=yG(x,y)$ at $E_3^\star$ give, using $\partial_yF(E_3^\star)=\gamma U_1$ and $(a+b)U_1=-(b+\delta d(y_3^\star)^{\delta-1})$,
\[
\begin{aligned}
\mathbf U^\top D^2F_1(E_3^\star,g^\star)\mathbf U&=-\tfrac{2g^\star x_3^\star}{(1-y_3^\star)^3}\,,\\[2pt]
\mathbf U^\top D^2F_2(E_3^\star,g^\star)\mathbf U&=-2(a+b)U_1-2b-\delta(\delta+1)d(y_3^\star)^{\delta-1}\,.
\end{aligned}
\]
Combining these with $\mathbf W=(1,-\tfrac{\gamma x_3^\star}{(a+b)y_3^\star})^\top$ and simplifying by~\eqref{Lderivs},
\[
\mathbf W^\top\big(D^2\mathbf F(E_3^\star,g^\star)(\mathbf U,\mathbf U)\big)
=-\tfrac{2g^\star x_3^\star}{(1-y_3^\star)^3}+\tfrac{\gamma x_3^\star\delta(\delta-1)d(y_3^\star)^{\delta-2}}{a+b}
=x_3^\star L''(y_3^\star)\,.
\]
Since $L$ is strictly concave, $L''(y_3^\star)<0$, so this is nonzero. Thus~\eqref{sotoSN} holds and \eqref{dxdy} undergoes a saddle-node bifurcation at $E_3^\star$.
\end{proof}

Beyond the bifurcation type, the nondegeneracy conditions~\eqref{ndB} and~\eqref{ndC}, together with the strict concavity of $L$, fix the local phase portrait of each nonhyperbolic equilibrium at its threshold.

\begin{prop}\label{prop:sn}
Suppose~\eqref{GA} holds. Then each nonhyperbolic equilibrium is a saddle-node point in the sense of~\cite[Sec.~2.11]{Pe01}, as follows.
\begin{itemize}
\item[\textup{(i)}] The macroalgae-only state $B$ at $g=g_1$, provided $\delta\in\N$ and~\eqref{ndB} holds.
\item[\textup{(ii)}] The coral-only state $C$ at $g=g_0$, provided~\eqref{ndC} holds.
\item[\textup{(iii)}] The interior equilibrium $E_3^\star=(K(y_3^\star),y_3^\star)$ at $g=g^\star$, whenever $g^\star>g_1$ and $L$ is strictly concave on $(0,y_C]$.
\end{itemize}
\end{prop}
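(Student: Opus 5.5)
The plan is to apply the classification theorem for nonhyperbolic equilibria with one zero eigenvalue (Perko, Sec.~2.11, Theorem~1). In each of the three cases, translate the equilibrium to the origin and apply the linear change of coordinates $(u,v)^\top=P^{-1}\mathbf X$, where the columns of $P$ are the right eigenvector $\mathbf U$ for the zero eigenvalue and an eigenvector for the nonzero eigenvalue $\lambda$. The nonzero eigenvalue is $\lambda_1<0$ at $B$, $\mu_2<0$ at $C$, and $\operatorname{tr}J(E_3^\star)<0$ at $E_3^\star$. This brings the system to the normal form
\[
\dot u=P_2(u,v),\qquad \dot v=\lambda v+Q_2(u,v),
\]
with $P_2,Q_2$ of second order. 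Next solve $\lambda v+Q_2(u,v)=0$ locally for $v=\phi(u)=O(u^2)$ by the implicit function theorem. Finally compute $P_2(u,\phi(u))=a_m u^m+\dots$ and check that $m=2$, i.e.\ $a_2\neq0$. Perko's theorem then says that the origin is a saddle-node.

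The key observation is that $a_2$ is, up to a nonzero factor, exactly the Sotomayor quantity already computed. Since $\phi(u)=O(u^2)$, we have $P_2(u,\phi(u))=\tfrac12\,\partial_{uu}P_2(0,0)\,u^2+O(u^3)$. The $u$-component of the vector field is $\mathbf e_1^\top P^{-1}\mathbf F(P(u,v)^\top)$, and $\mathbf e_1^\top P^{-1}$ is a nonzero multiple of the left eigenvector $\mathbf W^\top$. It follows that
\[
a_2=c\,\mathbf W^\top D^2\mathbf F(\mathbf X_0,g^\dagger)(\mathbf U,\mathbf U),\qquad c\neq0.
\]
The three cases then follow from the earlier computations.

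\textbf{(i)} At $B$, the proof of Theorem~\ref{thm:transcritB} gives $2\big(\tfrac{(a+b)x_B(a-\gamma-g_1)}{\lambda_1}-(b+d\mathds 1_{\{\delta=1\}})\big)$. This is nonzero precisely under~\eqref{ndB}. The hypothesis $\delta\in\N$ ensures that the field is $C^2$ (indeed analytic) near $Y=0$, including $Y<0$, which the normal-form theorem needs.

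\textbf{(ii)} At $C$, the proof of Theorem~\ref{thm:transcritC} gives $2U_1\big(a-\gamma-\tfrac{g_0}{(1-y_C)^2}-\gamma U_1\big)$. Here $U_1\neq0$, and the bracket is nonzero by~\eqref{ndC}. Smoothness holds for every $\delta>0$ because $y_C>0$.

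\textbf{(iii)} At $E_3^\star$, the proof of Theorem~\ref{thm:saddlenode} gives $x_3^\star L''(y_3^\star)$. This is negative by strict concavity, and the field is smooth near this interior point.

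The main obstacle is making the identification of $a_2$ with the Sotomayor quantity rigorous. This requires checking that the normalization of $P^{-1}$ really yields a nonzero multiple of $\mathbf W$. It holds because the first row of $P^{-1}$ annihilates the stable eigenvector and is therefore orthogonal to the range of $D\mathbf F-0\cdot I$ restricted to the $\lambda$-eigenspace, which characterizes the left null vector. As a fallback, in cases (i) and (ii) the Jacobian is triangular, so I would carry out the change of variables explicitly. For example, at $C$ one can take $u=x$ and $v=y-y_C-U_2x/U_1$-type shears, and read off the $u^2$ coefficient of $\dot x=xF(x,y)$ on the center manifold directly as $x\,\partial_xF+x\,\partial_yF\,\phi'$-terms. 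This confirms that the coefficient matches.
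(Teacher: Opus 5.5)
Your argument is correct. It uses the same framework as the paper's appendix: Perko's Theorem~1 of Sec.~2.11, with diagonalization, a center manifold from the implicit function theorem, and the order $m$ of the reduced flow. It verifies $m=2$ by a genuinely different route, however.

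The paper expands each translated field to third or fourth order in explicit coordinates. It then checks by direct algebra that the $\bar x^2$-coefficient of the reduced flow is nonzero, showing case by case that this is equivalent to \eqref{ndB}, \eqref{ndC}, and $L''(y_3^\star)\neq0$.

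You instead prove once that this coefficient is a nonzero multiple of $\mathbf W^\top D^2\mathbf F(\mathbf U,\mathbf U)$. Up to the time-rescaling factor, the $u^2$-coefficient of $P_2(u,\phi(u))$ equals $\tfrac12\,\mathbf w^\top D^2\mathbf F(\mathbf U,\mathbf U)$ with $\mathbf w^\top=\mathbf e_1^\top P^{-1}$, because $\phi=O(u^2)$ and $P_2$ has no linear part. Moreover $\mathbf w^\top J=\mathbf e_1^\top\operatorname{diag}(0,\lambda)P^{-1}=0$, and since the zero eigenvalue is simple this forces $\mathbf w\parallel\mathbf W$. Everything then reduces to the Sotomayor quantities already computed in Theorems~\ref{thm:transcritB}, \ref{thm:transcritC}, and~\ref{thm:saddlenode}, and your reading of each is right.

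In fact the paper's own coordinates have $\mathbf e_1^\top P^{-1}=\mathbf W^\top$ in all three cases ($\bar x=Y$, $\bar x=X$, $\bar x=X-\kappa Y$), with first column a multiple of $\mathbf U$. That is exactly why its coefficients come out as such multiples, e.g.\ $\alpha_{2,0}=x_3^\star L''(y_3^\star)/(2\nu_2\Delta^2)$.

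What each route buys:
\begin{itemize}
\item Your route is shorter and less exposed to algebraic slips in long expansions. It also shows structurally that the saddle-node type at each threshold is governed by the same second-order condition as Sotomayor's theorem. Hence Proposition~\ref{prop:sn} needs no hypotheses beyond those of the bifurcation theorems.
\item The paper's explicit expansions additionally give the actual normal-form coefficients, such as the sign $\alpha_{2,0}>0$, which locates the parabolic sector. The proposition itself does not require this.
\end{itemize}

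Two small remarks. To match Perko's form $\dot v=v+Q_2$ you should rescale time by $\lambda<0$, which is harmless for $m=2$. And, exactly as in the paper, the step from ``strictly concave'' to $L''(y_3^\star)<0$ really uses the pointwise bound $L''<0$, which is what Lemma~\ref{lem:concavity} provides in the listed cases.
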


\begin{proof}
In each case the center-manifold reduction at the threshold has a leading term of order $m=2$, the normal form of a saddle-node. The computations are deferred to Appendix~\ref{app:proofs}.
\end{proof}

Together, Theorems~\ref{thm:transcritB},~\ref{thm:transcritC}, and~\ref{thm:saddlenode} show that, as the grazing intensity increases, the reef passes through a transcritical bifurcation at $g=g_0$ (where the coral-dominated state $C$ gains stability), a transcritical bifurcation at $g=g_1$ (where the macroalgae-dominated state $B$ loses stability), and, when $g^\star>g_1$, a saddle-node bifurcation at $g=g^\star$ (where the coexistence states annihilate one another). This full sequence of bifurcations, with the corresponding phase portraits, is depicted for a representative parameter set in Figure~\ref{fig:twointerior}. The ecological consequences are discussed in Section~\ref{sec:conclusion}.

\bigskip

\section{Examples and Numerical Simulations}\label{sec:exnum} 
\subsection{Special cases of the crowding exponent}\label{sec:examples}
We apply the foregoing analysis to three crowding regimes, linear ($\delta=1$), quadratic ($\delta=2$), and intermediate ($1<\delta<2$). For $\delta=1$ and $\delta=2$ the quantities of Section~\ref{sec:interior} become explicit: we specialize the coral nullcline~\eqref{K}, the reduced function~\eqref{Ldef} and its derivatives~\eqref{Lderivs}, the thresholds~\eqref{g0} and~\eqref{g1}, and the maximizer $\hat y$ of $\tilde g$ given in~\eqref{tildeg} that determines $g^\star$ from~\eqref{gstar}.\\

\noindent\emph{Linear crowding $\delta=1$.}\quad The nullcline~\eqref{K} is a line, $y_C$ from~\eqref{yC} is explicit, and~\eqref{g0} gives
\[
K(y)=\frac{b-m_0-(b+d)y}{a+b},\qquad y_C=\frac{b-m_0}{b+d}\,,\qquad
g_0=\frac{d+m_0}{b+d}\Big(\gamma-n+(a-\gamma)\tfrac{b-m_0}{b+d}\Big)\,.
\]
The $d$-term in~\eqref{Lderivs} vanishes, leaving $L''(y)=-\tfrac{2g}{(1-y)^3}<0$, so $L$ is strictly concave for every $g>0$ (Lemma~\ref{lem:concavity}(i)) and Theorem~\ref{thm:classification} applies. Theorems~\ref{thm:transcritB} and~\ref{thm:transcritC} (the former since $\delta\in\N$) give the transcritical bifurcations at $B$ and $C$ under~\eqref{ndB} and~\eqref{ndC}, which here read
\[
g_1\neq a-\gamma+\tfrac{\gamma(b+d)}{a+b},\qquad
g_0\neq(1-y_C)^2\Big(a-\gamma+\tfrac{\gamma(b+d)}{a+b}\Big)\,.
\]
Here $\tilde g$~\eqref{tildeg} is the quadratic $\tilde g(y)=(1-y)\big(g_1+(\tfrac{\gamma(b+d)}{a+b}+a-\gamma)y\big)$, with maximizer $\hat y$~\eqref{gstar} the root of $\tilde g'(y)=0$,
\[
\hat y=\frac{1}{2}\left(1-\frac{(a+b)g_1}{\gamma d+a(a+b-\gamma)}\right)\,,
\]
so that $g^\star=\tilde g(\hat y)$, and Theorem~\ref{thm:saddlenode} yields a saddle-node at $g^\star$ when $g^\star>g_1$, as in Figure~\ref{fig:twointerior} (a $\delta=1$ set with two interior equilibria by Theorem~\ref{thm:classification}(iii)).\\

\noindent\emph{Quadratic crowding $\delta=2$.}\quad The nullcline~\eqref{K} is a parabola and~\eqref{yC} gives
\[
K(y)=\frac{b-m_0-by-dy^2}{a+b},\qquad y_C=\frac{-b+\sqrt{b^2+4d(b-m_0)}}{2d}\,,
\]
with $g_0$ from~\eqref{g0}. Now~\eqref{Lderivs} gives $L''(y)=\tfrac{2\gamma d}{a+b}-\tfrac{2g}{(1-y)^3}$, so $L$ is strictly concave on $(0,y_C]$ exactly under the bound~\eqref{concbound}, that is $g\ge\tfrac{\gamma d}{a+b}$ (Lemma~\ref{lem:concavity}(ii)), and then Theorem~\ref{thm:classification} applies, as Figure~\ref{fig:bifdiagram} illustrates for $\delta=2$. Theorems~\ref{thm:transcritB} and~\ref{thm:transcritC} (the former since $\delta\in\N$) give the transcritical bifurcations at $B$ and $C$ under~\eqref{ndB} and~\eqref{ndC}, which here read
\[
g_1\neq a-\gamma+\tfrac{\gamma b}{a+b},\qquad
g_0\neq(1-y_C)^2\Big(a-\gamma+\tfrac{\gamma(b+2d y_C)}{a+b}\Big)\,.
\]
Here $\tilde g$~\eqref{tildeg} is cubic, with maximizer $\hat y$~\eqref{gstar} the larger root of $\tilde g'(y)=0$,
\[
\hat y=\frac{a+b}{3\gamma d}\left[-\Big(\tfrac{\gamma(b-d)}{a+b}+a-\gamma\Big)
+\sqrt{\Big(\tfrac{\gamma(b-d)}{a+b}+a-\gamma\Big)^{2}-\tfrac{3\gamma d}{a+b}\Big(g_1-\tfrac{\gamma b}{a+b}-a+\gamma\Big)}\right]\,,
\]
so that $g^\star=\tilde g(\hat y)$, and Theorem~\ref{thm:saddlenode} yields a saddle-node at $g^\star$ when $g^\star>g_1$. The set of Figure~\ref{fig:twointerior} (drawn for $\delta=1$) yields the same qualitative picture under $\delta=2$, with $g^\star>g_1$ and two interior equilibria for $g_1<g<g^\star$ by Theorem~\ref{thm:classification}(iii). \\

\noindent\emph{Intermediate crowding $1<\delta<2$.}\quad 
Here $L$ is no longer concave: the first term of $L''$ in~\eqref{Lderivs} diverges as $y\to0^+$, 
so $L''(0^+)=+\infty$ and $L$ is convex near $y=0$. Neither case of Lemma~\ref{lem:concavity} holds, so Theorem~\ref{thm:classification} is replaced by Proposition~\ref{prop:nonconcave} of Subsection~\ref{subsec:nonconcave}, which permits as many as three interior equilibria, against at most two when $L$ is concave. By Theorem~\ref{thm:transcritC} the transcritical bifurcation at $C$ holds for every $\delta>0$ under~\eqref{ndC}, while Theorem~\ref{thm:transcritB} for $B$ requires $\delta\in\N$ and does not apply.

For the parameter set of Figure~\ref{fig:threeinterior}, which satisfies~\eqref{GA}, one finds
\[
g_0\approx0.0072<g_1\approx0.0215\,,
\]
and $\tilde g$~\eqref{tildeg} has an interior local minimum at $\check y\approx0.120$, of value $\tilde g(\check y)\approx0.0208$, besides its maximizer $\hat y\approx0.388$ with $g^\star=\tilde g(\hat y)\approx g_1$. At $g\approx0.0212$ the reduced equation $L(y)=0$ has three roots,
\[
y\approx0.026\,,\qquad y\approx0.260\,,\qquad y\approx0.486\,,
\]
hence three interior equilibria $(K(y_i),y_i)$, $i=1,2,3$: a saddle, a stable node, and a saddle (Proposition~\ref{prop:nonconcave}), as in Figure~\ref{fig:threeinterior}. For $g_0<g<g_1$ the boundary states $B$ and $C$ are also stable (Theorems~\ref{thm:stabB} and~\ref{thm:stabC}), so $B$, $C$, and the interior node give three stable states and the reef is tristable. These roots occur only for $g\in(\tilde g(\check y),g_1)$, a window of width about $6.6\times10^{-4}$, and require the interior minimum of $\tilde g$. When $\tilde g$ is monotone on $[0,y_C]$, the ordering $g_0<g_1$ of Lemma~\ref{lem:ordering} makes it decrease from $\tilde g(0)=g_1$ to $\tilde g(y_C)=g_0$, so $g^\star=g_1$ and at most one interior equilibrium remains. Figure~\ref{fig:numfps} places the $1<\delta<2$ regime within the full $(\delta,g)$ count.


\subsection{Numerical illustrations}
We illustrate the foregoing analysis by numerical simulation of system~\eqref{dxdy}. 

Figure~\ref{fig:phaseplane} shows the nullclines and phase portraits in the low-, intermediate-, and high-grazing regimes of Theorem~\ref{thm:classification}.

Figure~\ref{fig:bifdiagram} shows the dependence of the interior equilibrium and the coral-only state on the crowding exponent, for $\delta=2$ and $\delta=5$. 

Figure~\ref{fig:twointerior} shows the bifurcation diagram and phase portraits for a representative $\delta=1$ set with two interior equilibria.

Figure~\ref{fig:threeinterior} shows the bifurcation diagram and phase portrait for a $\delta=1.8$ set with three interior equilibria.

Figure~\ref{fig:numfps} maps the number of interior equilibria across the $(\delta,g)$ plane, gathering the regimes of Figures~\ref{fig:phaseplane}--\ref{fig:threeinterior}.

\begin{figure}[H]
\centering
\includegraphics[width=\textwidth]{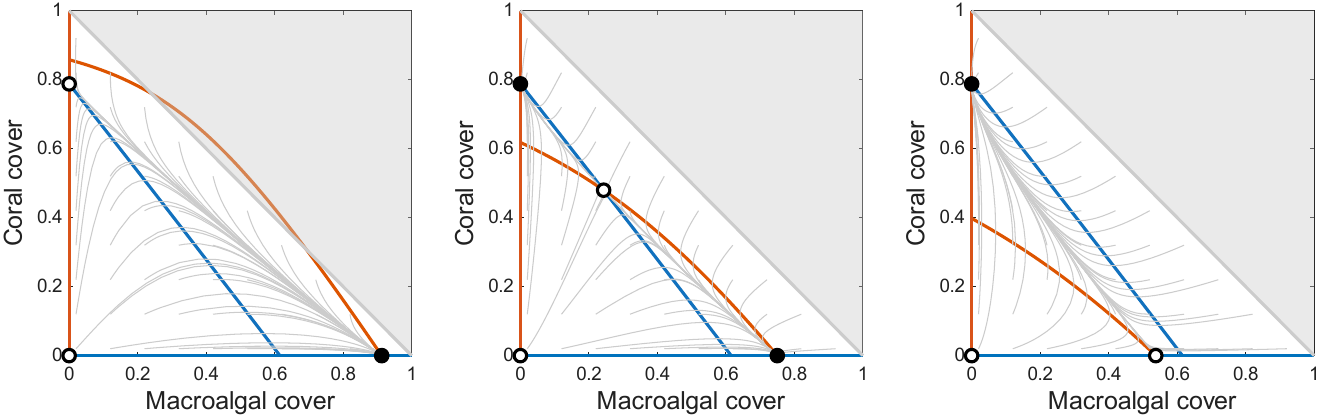}
\caption{Nullclines, equilibria, and stability as the grazing rate increases ($g=0.05$, $0.18$, and $0.35$ for the left, middle, and right panels, respectively). Orange curves are the $\dot x=0$ macroalgae nullclines, blue curves the $\dot y=0$ coral nullclines, and gray curves are sample trajectories. The gray triangle lies outside the domain. The crowding parameter is fixed at $\delta=2$, and the remaining parameters $(a,b,\gamma,m_0,d,n)=(0.3,\,1,\,0.8,\,0.2,\,0.02,\,0.02)$ lie within the ranges of Table~\ref{tab:params}. Filled and open dots mark stable and unstable equilibria, respectively. The coral-only state $C$ is stable for sufficiently high grazing, whereas the macroalgae-only state $B$ is stable only for sufficiently low grazing. For intermediate grazing (middle panel) the interior equilibrium exists and is a saddle, producing bistability, so that the resulting state is determined by the initial cover. The trivial state $O$ is unstable throughout. The three panels correspond to the low-, intermediate-, and high-grazing regimes of Theorem~\ref{thm:classification}.  }
\label{fig:phaseplane}
\end{figure}

\begin{figure}[H]
\centering
\includegraphics[width=\textwidth]{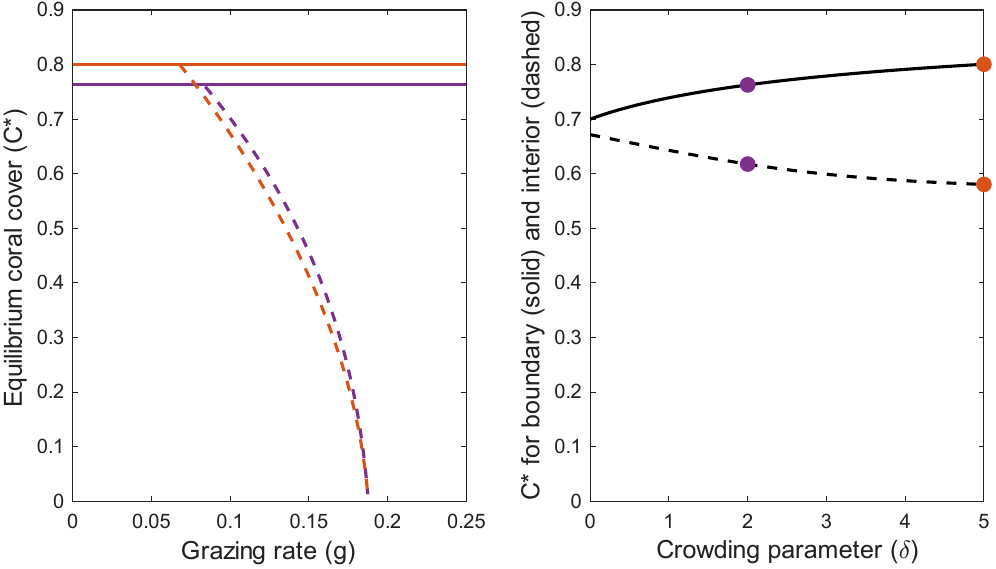}
\caption{The effects of crowding when there is one internal equilibrium. (Left) Bifurcation diagram of the equilibrium coral cover ($C^\ast$) versus the grazing rate $g$. Solid and dashed curves represent stable and unstable equilibria, respectively. The purple curves use $\delta=2$ and the red curves use $\delta=5$. The horizontal curves represent $y_C$ in the equilibrium $C$, which does not vary with $g$ but increases as $\delta$ increases. The interior equilibrium $E^\ast$ shifts to the left as $\delta$ increases. The remaining parameters are $(a,b,\gamma,m_0,d,n)=(0.3,\,1,\,0.6,\,0.15,\,0.15,\,0.02)$. (Right) Summary of the values of the equilibria as $\delta$ varies, with $\delta=2$ and $\delta=5$ indicated by dots matching the colors from the left figure. Here the grazing rate is fixed at $g=0.1$.} 
\label{fig:bifdiagram}
\end{figure}

\begin{figure}[H]
\centering
\includegraphics[width=\textwidth]{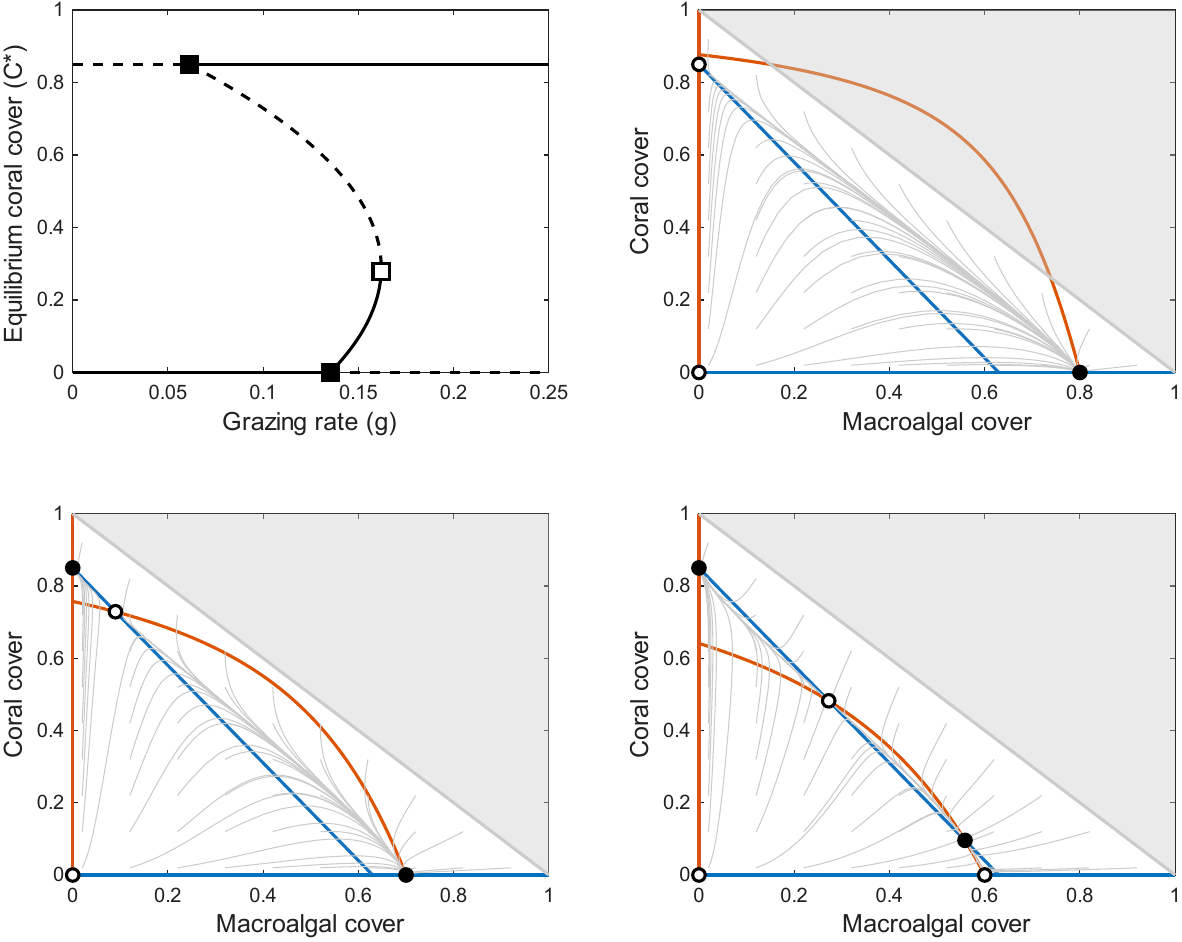}
\caption{Bifurcation and phase diagrams for the parameter set $(a,b,\gamma,m_0,d,n,\delta)=(0.45,\,0.9,\,0.5,\,0.05,\,0.1,\,0.05,\, 1)$ to summarize our findings when there are two interior equilibria. (Top left) Bifurcation diagram of the equilibrium coral cover ($C^*$) versus grazing rate ($g$). The solid and open squares indicate transcritical and saddle node bifurcations, respectively. The remaining panels are phase diagrams similar to those provided in Figure \ref{fig:phaseplane} but with this parameter set. The grazing rates used are $0.05$ (top right), $0.1$ (bottom left), and $0.15$ (bottom right).}
\label{fig:twointerior}
\end{figure}

\begin{figure}[H]
\centering
\includegraphics[width=\textwidth]{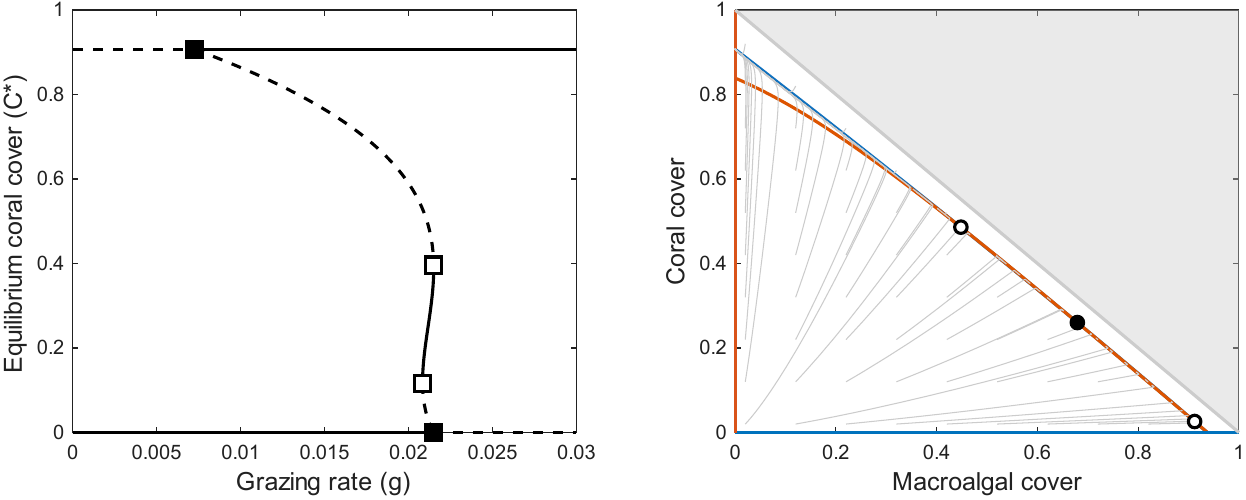}
\caption{Bifurcation and phase diagrams for the parameter set $(a,b,\gamma,m_0,d,n,\delta)=(0.035,\,1,\,0.82,\,0.03,\,0.075,\,0.03,\,1.8)$ to summarize our findings when there are three interior equilibria. (Left) Bifurcation diagram of the equilibrium coral cover ($C^*$) versus grazing rate ($g$). The solid and open squares indicate transcritical and saddle node bifurcations, respectively. The right panel is a phase diagram similar to those provided in Figure  \ref{fig:phaseplane} but with this parameter set and a grazing rate ($g$) of $0.0212$. The marked bifurcations are determined numerically for this non-integer $\delta$. }
\label{fig:threeinterior}
\end{figure}

\begin{figure}[H]
\centering
\hspace*{-1in}
\includegraphics[scale=0.75]{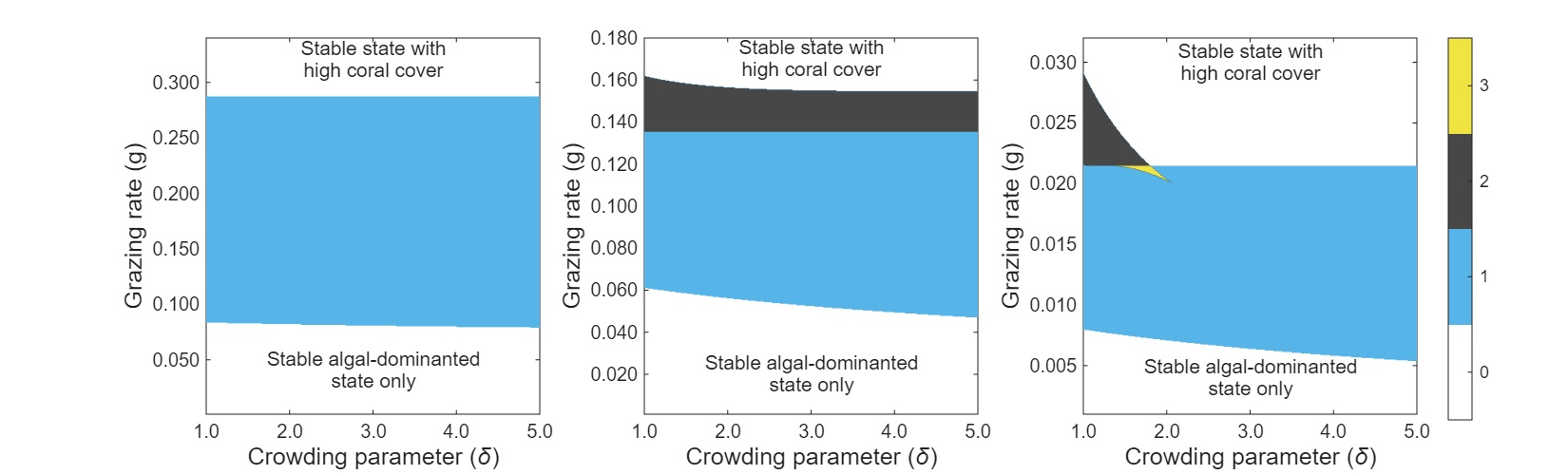}
\caption{Parameter space showing the number of interior fixed points as the crowding parameter ($\delta$) and grazing rate ($g$) are varied. The panels summarize the dynamics from Figures~\ref{fig:phaseplane}--\ref{fig:threeinterior} under different parameter regimes: left panel (parameters as in Figure~\ref{fig:phaseplane}), middle panel (Figure~\ref{fig:twointerior}), and right panel (Figure~\ref{fig:threeinterior}). Note that the $g$-axis covers a smaller range in the right panel for visual clarity.}
\label{fig:numfps}
\end{figure}

\bigskip

\section{Conclusion}\label{sec:conclusion}
We have analyzed a coral--macroalgae competition model in which coral mortality increases with cover through a crowding term $dC^{\delta}$. The system has no periodic orbit (Lemma~\ref{lem:dulac}), and its dynamics are organized by the grazing rate $g$ through two ordered thresholds $0<g_0<g_1<\gamma-n$: the coral-only state $C$ is stable for $g>g_0$ and the macroalgae-only state $B$ for $g<g_1$, and both are stable on the intermediate range. As $g$ crosses these thresholds, $C$ and $B$ exchange stability with a coexistence equilibrium through transcritical bifurcations (Theorems~\ref{thm:transcritC} and~\ref{thm:transcritB}, the latter for $\delta\in\N$). When the reduced function $L$ is concave, as in the constant-mortality model of~\cite{Mum07,Li14}, high grazing admits at most two coexistence states, a stable node and a saddle that collide in a saddle-node bifurcation at a third threshold $g^\star$ (Theorem~\ref{thm:saddlenode}). Intermediate crowding $1<\delta<2$ makes $L$ non-concave and can produce as many as three coexistence states (Proposition~\ref{prop:nonconcave}), so that the reef can be tristable, a regime beyond the reach of constant mortality. More broadly, the analysis shows how density-dependent crowding reshapes the coral--macroalgae dynamics across all crowding exponents: it shifts the grazing thresholds and the coral nullcline for every $\delta>0$, whereas this qualitatively new behavior is confined to that intermediate regime.

These results have direct ecological implications. The dynamics take place in the positively invariant region~$\Omega$, where the model has no limit cycle (Lemma~\ref{lem:dulac}). Coral and macroalgae cover therefore cannot settle into sustained oscillations.
We have characterized the local stability of the equilibria, and the phase portraits of Section~\ref{sec:exnum} show the reef approaching one of the stable ones according to the grazing regime and the initial cover. The bistable window $g_0<g<g_1$ is, moreover, the source of hysteresis: as grazing declines from a level at which coral dominates, the coral-dominated state remains stable until $g$ falls below $g_0$, where it loses stability through the transcritical bifurcation of Theorem~\ref{thm:transcritC} and the reef shifts abruptly to macroalgae dominance. Recovery does not retrace this path, since the macroalgae-dominated state remains stable throughout $g<g_1$: returning grazing to $g_0$ leaves the reef macroalgae-dominated, and coral is regained only once $g$ exceeds the higher threshold $g_1$, where the macroalgae-dominated state itself loses stability (Theorem~\ref{thm:transcritB}, for integer $\delta$). The gap $g_1-g_0$ thus measures the additional grazing that recovery demands, and the wider it is, the more resistant a degraded reef is to reversal.

Beyond the two dominance states, the model also admits a mixed coral--macroalgae community. In the bistable regime this interior equilibrium is a saddle, separating the two outcomes rather than offering a state the reef can rest in. Under stronger grazing it is joined by a stable node, an observable mixed community attained for a range of initial cover, and the two collide and disappear at $g^\star$ in the saddle-node bifurcation of Theorem~\ref{thm:saddlenode}, a further tipping point beyond which coral dominance is the only stable state. Crowding adds one further possibility: for intermediate $1<\delta<2$ the mixed community can be stable alongside both dominance states, so that the reef admits three stable communities, the one realized being selected by the initial cover.

\bigskip

\begin{appendix}
\section{Proof of Proposition~\ref{prop:sn}}\label{app:proofs}
To prove Proposition~\ref{prop:sn} it remains to carry out, for each of the three nonhyperbolic equilibria, the normal-form reduction of \cite[Thm.~1, Sec.~2.11]{Pe01} (equivalently \cite[Thm.~7.1, Chap.~2]{Zha92}): translate the equilibrium to the origin, reduce to the center manifold, and read off the order $m$ of the leading term of the reduced flow, with $m=2$ a saddle-node. Throughout, a dot denotes differentiation with respect to the original time and a subscript $\tau$ with respect to the rescaled time.\\

\noindent\textbf{\textup{(i)} The macroalgae-only state $B$ at $g=g_1$.}\quad
At $g=g_1$ the equilibrium $B$ is nonhyperbolic, with eigenvalues $\lambda_2=0$ and $\lambda_1<0$.
Translating $B$ to the origin by $(X,Y)=(x-x_B,y)$, with $x_B$ as in~\eqref{xB}, and expanding in a power series to fourth order, we obtain
\begin{subequations}\label{eq:B Trafo 1}
\begin{align}
\dot X &= \lambda_1X+x_B\left(a-\gamma - g_1\right)Y+\left(a-\gamma - g_1\right)XY-\gamma X^2- g_1x_BY^2\\&\quad - g_1 XY^2- g_1x_BY^3-g_1XY^3- g_1x_BY^4+P_1(X,Y)\,, \notag\\
\dot Y &= -(a+b) XY-bY^2 -dY^{\delta +1}.
\end{align}
\end{subequations}
where $P_1(X,Y)$ is the power series collecting the terms $X^iY^j$ with $i+j\geq5$. The linear change of variables 
\begin{align*}
\begin{pmatrix}
X\\
Y
\end{pmatrix}=
\begin{pmatrix}
\frac{-x_B(a-\gamma-g_1)}{\lambda_1}& \frac{1}{\lambda_1}\\
1 & 0
\end{pmatrix}
\begin{pmatrix}
\bar{x}\\
\bar{y}
\end{pmatrix}
\end{align*}
brings the linear part of~\eqref{eq:B Trafo 1} to diagonal form, turning it into
\begin{equation*}
\begin{split}
\dot{\bar{x}} &=  a_{1,1}  \bar{x}\bar{y}+  
a_{2,0} \bar{x}^2+a_{\delta+1,0}\bar{x}^{\delta+1}
\\
\dot{\bar{y}} &= b_{1,0}\bar{y}+b_{1,1} \bar{x}\bar{y}+b_{0,2}\bar{y}^2+b_{2,0}\bar{x}^2+b_{2,1} \bar{x}^2\bar{y}+b_{3,0}\bar{x}^3+b_{\delta+1,0}\bar{x}^{\delta+1}+P_2(\bar{x},\bar{y}),
\end{split}
\end{equation*}
with
\begin{equation*}
\begin{split}
a_{1,1}&:= -\frac{a+b}{\lambda_1}\,,\qquad a_{2,0}:= \frac{x_B(a-\gamma-g_1)(a+b)}{\lambda_1}-b\,,\qquad a_{\delta+1,0}:= -d\,, \\
b_{1,0}&:= \lambda_1\,, \qquad b_{0,2}:=-\frac{\gamma}{\lambda_1}\,,\qquad b_{2,1}:= -g_1\,,\\
b_{1,1}&:= a-\gamma-g_1+\frac{x_B(a-\gamma-g_1)(2\gamma-(a+b))}{\lambda_1}\,,\\
b_{2,0}&:= \frac{x_B^2(a-\gamma-g_1)^2(a+b-\gamma)}{\lambda_1}-x_B(a-\gamma-g_1)(a+b-\gamma-g_1)-\lambda_1g_1x_B\,,\\
b_{3,0}&:= g_1\left(x_B(a-\gamma-g_1)-\lambda_1x_B \right)\,, \qquad b_{\delta+1,0}:= -dx_B(a-\gamma-g_1)\,,
\end{split}
\end{equation*}
and $P_2(\bar{x},\bar{y})$ a power series of order $\geq4$. Rescaling time by $\tau:=\lambda_1 t$ then gives 
\begin{subequations}\label{eq:B Trafo 2}
\begin{align}
\bar{x}_\tau &= c_{1,1} \bar{x}\bar{y}+
c_{2,0}\bar{x}^2+c_{\delta+1,0}\bar{x}^{\delta+1}\,,\label{eq:B Trafo 2a}
\\
\bar{y}_\tau &= \bar{y}+d_{1,1}\bar{x}\bar{y}+d_{0,2}\bar{y}^2+
d_{2,0}\bar{x}^2+d_{2,1}\bar{x}^2\bar{y}+d_{3,0}\bar{x}^3+d_{\delta+1,0}\bar{x}^{\delta+1}+P_3(\bar{x},\bar{y})\,.
\end{align}
\end{subequations}
where $c_{i,j}:=\lambda_1^{-1}a_{i,j}$ and $d_{i,j}:=\lambda_1^{-1}b_{i,j}$, and $P_3(\bar{x},\bar{y})$ is a power series of order $\geq4$. For $\delta\in\N$ the system is thus in the normal form
\begin{equation}\label{eq:transformed_vf_B}
\bar{x}_\tau=P(\bar{x},\bar{y})\,, \qquad  \bar{y}_\tau=\bar{y}+Q(\bar{x},\bar{y})\,,
\end{equation}
of \cite[Thm.~1, Sec.~2.11]{Pe01}, with $P,Q$ analytic near the origin of order $\geq2$. It remains to reduce to the center manifold. Set $\mathcal{H}(\bar{x},\bar{y}):=\bar{y}+Q(\bar{x},\bar{y})$. Since $Q$ carries no constant or linear term, $\mathcal{H}(0,0)=0$ and $\mathcal{H}_{\bar{y}}(0,0)=1\neq0$, so by the Implicit Function Theorem $\bar{y}+Q(\bar{x},\bar{y})=0$ has near the origin a unique analytic solution $\bar{y}=\phi(\bar{x})$ with $\phi(\bar{x})+Q(\bar{x},\phi(\bar{x}))\equiv0$ and $\phi(0)=\phi'(0)=0$. Writing $\phi(\bar{x})=A_2\bar{x}^2+A_3\bar{x}^3+\dotsc$ and matching powers of $\bar{x}$ in $\mathcal{H}(\bar{x},\phi(\bar{x}))=0$ yields
\begin{equation*}
\begin{split}
0&=\mathcal{H}(\bar{x},\phi(\bar{x}))=\phi(\bar{x})+Q(\bar{x},\phi(\bar{x})) \\
&= (A_2 + d_{2,0} + d_{\delta+1,0}\mathds{1}_{\{ \delta =1\}} )  \bar{x}^2 + (A_3 + d_{1,1}A_2 + d_{3,0} + d_{\delta+1,0}\mathds{1}_{\{ \delta =2\}})  \bar{x}^3 + O(\bar{x}^4)\,,
\end{split}
\end{equation*}
as $\bar{x} \to 0$, where $\mathds{1}_{\{ \delta =k\}}=1$ if $\delta=k$ and $0$ otherwise, whence $A_2=-(d_{2,0}+d_{\delta+1,0}\mathds{1}_{\{\delta=1\}})$ and
\begin{equation*}
\begin{split}
A_3&=-(d_{1,1} A_2 + d_{3,0} + d_{\delta+1,0}\mathds{1}_{\{ \delta =2\}})\\
&=d_{1,1}(d_{2,0} + d_{\delta+1,0}\mathds{1}_{\{ \delta =1\}}) - d_{3,0} - d_{\delta+1,0}\mathds{1}_{\{ \delta =2\}}\,.
\end{split}
\end{equation*}
The reduced flow on the center manifold is given by $\psi(\bar{x}):= P(\bar{x},\phi(\bar{x}))$, where $P(\bar{x},\bar{y})=c_{1,1}\bar{x}\bar{y}+c_{2,0}\bar{x}^2+c_{\delta+1,0}\bar{x}^{\delta+1}$ is the right-hand side of~\eqref{eq:B Trafo 2a}. We obtain
\begin{equation*}
\psi(\bar{x}) = (c_{2,0} + c_{\delta+1,0}\mathds{1}_{\{ \delta =1\}}) \bar{x}^2 + ( c_{1,1}A_2 +c_{\delta+1,0}\mathds{1}_{\{ \delta =2\}}) \bar{x}^3 + O(\bar{x}^4)
\end{equation*}
as $\bar{x} \to 0$. The nondegeneracy condition~\eqref{ndB} is equivalent, by $\lambda_1\neq 0$ and the identity
\[
\frac{x_B(a-\gamma-g_1)(a+b)}{\lambda_1}=-\frac{(a-\gamma-g_1)(a+b)}{\gamma}\,,
\]
to $c_{2,0} + c_{\delta+1,0}\mathds{1}_{\{ \delta =1\}}\neq 0$, so that the leading term of $\psi$ is quadratic. Hence $m = 2$, and $B$ is a saddle-node by \cite[Thm.~1, Sec.~2.11]{Pe01}.\\
    
\noindent\textbf{\textup{(ii)} The coral-only state $C$ at $g=g_0$.}\quad
At $g=g_0$ the equilibrium $C$ is nonhyperbolic, with eigenvalues $\mu_1=0$ and $\mu_2<0$.
Translating $C$ to the origin by $(X,Y)=(x,y-y_C)$ and expanding in a power series to fourth order, we obtain
\begin{subequations}\label{eq:C Trafo 1}
\begin{align}
\dot X&= -\gamma X^2 + \bigg((a-\gamma) - \frac{g_0}{(1-y_C)^2} \bigg) XY
- \frac{g_0}{(1-y_C)^3} XY^2 - \frac{g_0}{(1-y_C)^4} XY^3\\
&\quad + P_1(X,Y)\,,\notag\\
\dot Y &= -(a+b)y_C X + \mu_2 Y
-(a+b)XY - \bigg(b+ \frac{1}{2} d \delta (\delta +1) y_C^{\delta -1}\bigg) Y^2\\
& \quad
- \frac{1}{6} d (\delta -1) \delta (\delta +1) y_C^{\delta -2} Y^3
- \frac{1}{24} d (\delta -2)(\delta -1) \delta (\delta +1) y_C^{\delta -3} Y^4 \notag \\
&\quad + Q_1(X,Y)\,, \notag 
\end{align}
\end{subequations}
where $P_1(X,Y)$ and $Q_1(X,Y)$ are power series collecting the terms $X^iY^j$ with $i+j\geq5$. The change of variables $\bar{x}=X$, $\bar{y}=-(a+b)y_C X + \mu_2 Y$, $\tau=\mu_2 t$ brings~\eqref{eq:C Trafo 1} into the normal form of \cite[Eq.~(2), Sec.~2.11]{Pe01},
\begin{equation}\label{eq:Zhang form}
\bar{x}_\tau =P_2(\bar{x}, \bar{y}), \qquad \bar{y}_\tau =\bar{y} + Q_2(\bar{x}, \bar{y})\,,
\end{equation}
with $P_2$ and $Q_2$ analytic near the origin of order $\geq2$. Carrying out the substitution and expanding gives
\begin{subequations}\label{eq:saddle node C}
\begin{equation}\label{eq:saddle node C_1}
\bar{x}_\tau = P_2(\bar{x}, \bar{y}) = A_{2,0} \bar{x}^2 +  A_{1,1} \bar{x}\bar{y} + A_{3,0} \bar{x}^3 + A_{2,1} \bar{x}^2\bar{y} +  A_{1,2} \bar{x}\bar{y}^2 +  \dotsc\,,
\end{equation}
where
\begin{equation*}
\begin{split}
A_{2,0}&:=  \frac{(a+b)y_C}{\mu_2^2} \bigg( (a-\gamma) - \frac{g_0}{(1-y_C)^2}\bigg)-\frac{\gamma}{\mu_2}\,, \qquad  A_{1,1}:= \frac{1}{\mu_2^2} \bigg( (a-\gamma) - \frac{g_0}{(1-y_C)^2}\bigg)\,, \\
A_{3,0}&:=  - \frac{(a+b)^2 y_C^2}{\mu_2^3} \frac{g_0}{(1-y_C)^3}\,, \qquad A_{2,1}:= - \frac{2(a+b) y_C}{\mu_2^3} \frac{g_0}{(1-y_C)^3}\,,\\
A_{1,2}&:= - \frac{1}{\mu_2^3} \frac{g_0}{(1-y_C)^3}\,, \, \dotsc \, ,
\end{split}
\end{equation*}
and
\begin{align}\label{eq:saddle node C_2}
\bar{y}_\tau &= \bar{y} + Q_2(\bar{x}, \bar{y}) \\
&= \bar{y} + B_{1,1} \bar{x}\bar{y}
+ B_{2,0} \bar{x}^2 + B_{0,2} \bar{y}^2 +  B_{3,0} \bar{x}^3
+ B_{0,3} \bar{y}^3 + B_{1,2} \bar{x}\bar{y}^2 +  B_{2,1} \bar{x}^2\bar{y}  + \dotsc\, ,  \notag
\end{align}
where
\begin{equation*}
\begin{split}
B_{1,1}&:= - \frac{(a+b)}{\mu_2} - \frac{(a+b)y_C}{\mu_2^2}
\bigg( (a-\gamma) - \frac{g_0}{(1-y_C)^2} + 2b + d \delta  (\delta +1) y_C^{\delta-1} \bigg)\,,
\\
B_{2,0} &:= \frac{(a+b)y_C}{\mu_2} \big(\gamma - (a+b)\big)
- \frac{(a+b)^2y_C^2}{\mu_2^2} \bigg( (a-\gamma) - \frac{g_0}{(1-y_C)^2} + b + \frac{1}{2} d \delta (\delta +1) y_C^{\delta-1} \bigg)\,,
\\
B_{0,2} &:= - \frac{d\delta (\delta+1)y_C^{\delta -1} + 2b}{2\mu_2^2}\,,
\\
B_{3,0}  &:=  \frac{(a+b)^3y_C^3}{\mu_2^3} \bigg( \frac{g_0}{(1-y_C)^3} - \frac{d(\delta-1)\delta (\delta + 1)y_C^{\delta-2}}{6} \bigg)\,,
\\
B_{0,3}  &:=  - \frac{d(\delta-1)\delta (\delta + 1)y_C^{\delta-2}}{6\mu_2^3}\,,
\\
B_{1,2}  &:=  \frac{(a+b)y_C}{\mu_2^3} \bigg( \frac{g_0}{(1-y_C)^3} - \frac{d(\delta-1)\delta (\delta + 1)y_C^{\delta-2}}{2}\bigg)\, ,
\\
B_{2,1}  &:=  \frac{2(a+b)^2y_C^2}{\mu_2^3} \bigg( \frac{g_0}{(1-y_C)^3} - \frac{d(\delta-1)\delta (\delta + 1)y_C^{\delta-2}}{4} \bigg)\,,    \dotsc\,.
\end{split}
\end{equation*}
\end{subequations}
By the Implicit Function Theorem there is, near $\bar{x}=0$, a unique analytic $\phi$ with $\phi(\bar{x})+Q_2(\bar{x},\phi(\bar{x}))\equiv0$, $\phi(0)=0$, and $\phi'(0)=0$, expanding as $\phi(\bar{x})=c_2\bar{x}^2+c_3\bar{x}^3+\dotsc$. Since $Q_2(\bar{x},\bar{y})=B_{1,1}\bar{x}\bar{y}+B_{2,0}\bar{x}^2+B_{0,2}\bar{y}^2$ up to terms of degree $\geq3$, inserting $\bar{y}=\phi(\bar{x})=c_2\bar{x}^2+O(\bar{x}^3)$ into $\bar{y}+Q_2(\bar{x},\bar{y})=0$ gives
\begin{equation*}
0 = \phi(\bar{x}) + Q_2 (\bar{x}, \phi(\bar{x})) = (c_2 +B_{2,0}) \bar{x}^2 + O(\bar{x}^3)\,,
\end{equation*}
whence $c_2 = - B_{2,0}$ and $\phi(\bar{x}) =- B_{2,0} \bar{x}^2 + O(\bar{x}^3)$. The reduced flow $\psi(\bar{x}):= P_2 (\bar{x}, \phi(\bar{x}))$ then satisfies
\begin{equation*}
\psi(\bar{x}) = A_{2,0} \bar{x}^2 + A_{1,1}  \bar{x} \phi(\bar{x}) + A_{3,0}  \bar{x}^3 + A_{2,1} \bar{x}^2 \phi(\bar{x}) + A_{1,2} \bar{x} \phi(\bar{x})^2 + \dotsc\,,
\end{equation*}
so that its leading term is $A_{2,0} \bar{x}^2$. The nondegeneracy condition~\eqref{ndC} is equivalent, after rearrangement, to $A_{2,0}\neq 0$, so that $m=2$. Hence $C$ is a saddle-node by \cite[Thm.~1, Sec.~2.11]{Pe01}.\\

\noindent\textbf{\textup{(iii)} The interior equilibrium $E_3^\ast$ at $g=g^\ast$.}\quad
At $g=g^\star$ the interior equilibrium $E_3^\star=(x_3^\star,y_3^\star)$ is nonhyperbolic, with eigenvalues $0$ and $\nu_2:=\operatorname{tr}J(E_3^\star)<0$ and with $L'(y_3^\star)=0$ (Theorem~\ref{thm:classification}). The right-hand side of~\eqref{dxdy} is real-analytic near $E_3^\star$ for every $\delta>0$. Translating $E_3^\star$ to the origin by $(X,Y)=(x-x_3^\star,y-y_3^\star)$ and expanding in a power series to fourth order, we obtain
\begin{subequations}\label{eq:E3_Trafo_1}
\begin{align}
\dot X &= -x_3^\star\gamma X + x_3^\star \partial_y F(x_3^\star,y_3^\star) Y \\
&\quad -\gamma X^2 + \partial_y F(x_3^\star,y_3^\star) XY - \frac{gx_3^\star}{(1-y_3^\star)^3}Y^2 - \frac{g}{(1-y_3^\star)^3}XY^2 \notag\\
&\quad - \frac{gx_3^\star}{(1-y_3^\star)^4}Y^3 - \frac{g}{(1-y_3^\star)^4}XY^3 - \frac{gx_3^\star}{(1-y_3^\star)^5}Y^4 + P_1(X,Y)\, , \notag\\
\dot Y &= -(a+b)y_3^\star X - y_3^\star\big(b+\delta d(y_3^\star)^{\delta-1}\big)Y \\
&\quad -(a+b)XY - \Big(b+\tfrac{1}{2}\delta(\delta+1)d(y_3^\star)^{\delta-1}\Big)Y^2 \notag\\
&\quad - \tfrac{1}{6}\delta(\delta-1)(\delta+1)d(y_3^\star)^{\delta-2}Y^3 \notag\\
&\quad - \tfrac{1}{24}\delta(\delta-2)(\delta-1)(\delta+1)d(y_3^\star)^{\delta-3}Y^4 + Q_1(X,Y)\,. \notag
\end{align}
\end{subequations}
The functions $P_1(X,Y)$ and $Q_1(X,Y)$ are power series collecting the terms $X^iY^j$ with $i+j\geq 5$, the linear part is the Jacobian $J(E_3^\star)$ of~\eqref{eq:Jacobi_iE}, and $\partial_y F(x_3^\star,y_3^\star)=(a-\gamma)-\tfrac{g}{(1-y_3^\star)^2}$, $\partial_y G(x_3^\star,y_3^\star)=-(b+\delta d(y_3^\star)^{\delta-1})$. The change of variables
\begin{equation}\label{eq:E3_Trafo_2}
\bar{x} = X - \frac{x_3^\star\gamma}{(a+b)y_3^\star} Y\, , \qquad \bar{y} = -X - \frac{b+\delta d(y_3^\star)^{\delta-1}}{a+b}Y\, , \qquad \tau = \nu_2 t\, ,
\end{equation}
with $\nu_2=-x_3^\star\gamma-y_3^\star\big(b+\delta d(y_3^\star)^{\delta-1}\big)$, has $Y$-coefficients fixed by $L'(y_3^\star)=0$. Rearranging this equality via~\eqref{Lstar} gives $\partial_y F(x_3^\star, y_3^\star)=-\tfrac{\gamma(b+\delta d(y_3^\star)^{\delta-1})}{a+b}$, hence $\tfrac{\partial_y F(x_3^\star, y_3^\star)}{\partial_y G(x_3^\star, y_3^\star)}=\tfrac{\gamma}{a+b}$ on dividing by $\partial_y G(x_3^\star, y_3^\star)=-(b+\delta d(y_3^\star)^{\delta-1})<0$, so that the constants in~\eqref{eq:E3_Trafo_2} are $\kappa:=\tfrac{x_3^\star\gamma}{(a+b)y_3^\star}$ and $\sigma:=\tfrac{1}{\gamma}\partial_y F(x_3^\star,y_3^\star)=-\tfrac{b+\delta d(y_3^\star)^{\delta-1}}{a+b}$. The change of variables~\eqref{eq:E3_Trafo_2} brings the system into the normal form $\bar{x}_\tau=P_2(\bar{x},\bar{y})$, $\bar{y}_\tau=\bar{y}+Q_2(\bar{x},\bar{y})$ of \cite[Eq.~(2), Sec.~2.11]{Pe01}, with $P_2,Q_2$ analytic near the origin of order $\geq2$. With $\Delta:=\sigma-\kappa=\tfrac{\nu_2}{(a+b)y_3^\star}$, carrying out the substitution and expanding give
\begin{subequations}\label{eq:saddle node E3}
\begin{equation}\label{eq:saddle node E3_1}
\bar{x}_\tau = P_2(\bar{x}, \bar{y}) = \alpha_{2,0}\bar{x}^2 + \alpha_{1,1}\bar{x}\bar{y} + \alpha_{0,2}\bar{y}^2 + O\big(|(\bar{x}, \bar{y})|^3\big)\,,
\end{equation}
where
\begin{equation*}
\begin{split}
\alpha_{2,0} &:= 
\frac{1}{\nu_2\Delta^2}\bigg[
-\frac{gx_3^\star}{(1-y_3^\star)^3} 
+ \kappa\Big((a+b)\sigma 
+ b + \tfrac{1}{2}\delta(\delta+1)
d\,(y_3^\star)^{\delta-1}\Big)\bigg]\,, 
\\
\alpha_{1,1} &:= 
\frac{1}{\nu_2\Delta^2}\bigg[
\Delta  \partial_y F(x_3^\star, y_3^\star)
- \frac{2gx_3^\star}{(1-y_3^\star)^3} 
 + \kappa\Big((a+b)(\sigma+\kappa)
+ 2b + \delta(\delta+1)
d(y_3^\star)^{\delta-1}\Big)
\bigg]\,, 
\\
\alpha_{0,2} &:= 
\frac{1}{\nu_2\Delta^2}\bigg[
-\gamma \kappa^2 + \partial_y F(x_3^\star, y_3^\star) \kappa 
 - \frac{gx_3^\star}{(1-y_3^\star)^3} 
+ \kappa\Big((a+b)\kappa
+ b + \tfrac{1}{2}\delta(\delta+1)
d(y_3^\star)^{\delta-1}\Big)
\bigg]\,. 
\end{split}
\end{equation*}
Moreover,
\begin{equation}\label{eq:saddle node E3_2}
\bar{y}_\tau = \bar{y} + Q_2(\bar{x}, \bar{y}) = \bar{y} + \beta_{2,0}\bar{x}^2 + \beta_{1,1}\bar{x}\bar{y} + \beta_{0,2}\bar{y}^2 + O\big(|(\bar{x}, \bar{y})|^3\big)\,,
\end{equation}
where
\begin{equation*}
\begin{split}
\beta_{2,0} &:= 
\frac{1}{\nu_2\Delta^2}\bigg[
\frac{gx_3^\star}{(1-y_3^\star)^3} 
- \sigma\Big((a+b)\sigma 
+ b + \tfrac{1}{2}\delta(\delta+1)
d\,(y_3^\star)^{\delta-1}\Big)
\bigg]\, , 
\\
\beta_{1,1} &:= 
\frac{1}{\nu_2\Delta^2}\bigg[
-\Delta  \partial_y F(x_3^\star, y_3^\star) 
+ \frac{2gx_3^\star}{(1-y_3^\star)^3} 
- \sigma\Big((a+b)(\sigma+\kappa)
+ 2b + \delta(\delta+1)
d(y_3^\star)^{\delta-1}\Big)
\bigg]\,, 
\\
\beta_{0,2} &:= 
\frac{1}{\nu_2\Delta^2}\bigg[
\gamma \kappa^2 - \partial_y F(x_3^\star, y_3^\star) \kappa  
+ \frac{gx_3^\star}{(1-y_3^\star)^3} 
 - \sigma\Big((a+b)\kappa
+ b + \tfrac{1}{2}\delta(\delta+1)
d(y_3^\star)^{\delta-1}\Big)
\bigg]\,.
\end{split}
\end{equation*}
\end{subequations}
Set $\mathcal{H}(\bar{x},\bar{y}):=\bar{y}+Q_2(\bar{x},\bar{y})$. Since $Q_2$ has no constant or linear part, $\mathcal{H}(0,0)=0$ and $\mathcal{H}_{\bar{y}}(0,0)=1\neq0$, so by the Implicit Function Theorem $\bar{y}+Q_2(\bar{x},\bar{y})=0$ has near the origin a unique analytic solution $\bar{y}=\phi(\bar{x})$ with $\phi(\bar{x})+Q_2(\bar{x},\phi(\bar{x}))\equiv0$ and $\phi(0)=\phi'(0)=0$. Writing $\phi(\bar{x})=c_2\bar{x}^2+c_3\bar{x}^3+\dotsc$ and matching the $\bar{x}^2$-coefficients in $\mathcal{H}(\bar{x},\phi(\bar{x}))=0$ gives
\begin{equation*}
0 = (c_2 + \beta_{2,0})\bar{x}^2 + O(\bar{x}^3),
\end{equation*}
hence $c_2 = -\beta_{2,0}$ and $\phi(\bar{x})=-\beta_{2,0}\bar{x}^2+O(\bar{x}^3)$. The reduced flow is given by $\psi(\bar{x}):= P_2(\bar{x},\phi(\bar{x}))$, for which
\begin{equation*}
\psi(\bar{x}) = \alpha_{2,0}\bar{x}^2 + \alpha_{1,1}\bar{x}\phi(\bar{x}) + \alpha_{0,2}\phi(\bar{x})^2 + \dotsc\, ,
\end{equation*}
so that the leading term of $\psi$ is $\alpha_{2,0}\bar{x}^2$. It remains to check that $\alpha_{2,0}\neq 0$, which gives $m=2$. Substituting $\kappa$ and $\sigma$ into $\alpha_{2,0}$ and using~\eqref{Lderivs} yields
\begin{equation*}
\begin{split}
\alpha_{2,0} 
= \frac{x_3^\star}{2\nu_2\Delta^2}\Big[\frac{\gamma \delta(\delta-1)d(y_3^\star)^{\delta-2}}{a+b}
-\frac{2g}{(1-y_3^\star)^3} 
\Big]
=  \frac{x_3^\star}{2\nu_2\Delta^2}  L''(y_3^\star) 
= \frac{(a+b)^2  x_3^\star (y_3^\star)^2}{2\nu_2^3} L''(y_3^\star)\, .
\end{split}
\end{equation*}
Since $L$ is strictly concave on $(0,y_C]$ and $x_3^\star,y_3^\star>0$ while $\nu_2^3<0$, we have $\alpha_{2,0}>0$. Hence $m=2$, and $E_3^\star$ is a saddle-node by \cite[Thm.~1, Sec.~2.11]{Pe01}.
\end{appendix}

\bigskip

\section*{Data Availability}
Data sharing is not applicable to this article as no datasets were generated or analyzed during the current study.




\begin{thebibliography}{99}
	
	\bibitem{Ban13}
	{\sc S.~S.~Ban, N.~A.~J.~Graham, and S.~R.~Connolly}, {\em Evidence for multiple stressor interactions and effects on coral reefs}, Glob. Change Biol., 20 (2014), pp.~681--697.
	
	\bibitem{Bayraktarov19}
	{\sc E.~Bayraktarov, P.~J.~Stewart-Sinclair, S.~Brisbane, L.~Bostr\"om-Einarsson, M.~I.~Saunders, C.~E.~Lovelock, H.~P.~Possingham, P.~J.~Mumby, and K.~A.~Wilson}, {\em Motivations, success, and cost of coral reef restoration}, Restor. Ecol., 27 (2019), pp.~981--991.
	
	\bibitem{Bell04}
	{\sc D.~R.~Bellwood, T.~P.~Hughes, C.~Folke, and M.~Nystr\"om}, {\em Confronting the coral reef crisis}, Nature, 429 (2004), pp.~827--833.
	
	\bibitem{Black12}
	{\sc J.~C.~Blackwood, A.~Hastings, and P.~J.~Mumby}, {\em The effect of fishing on hysteresis in Caribbean coral reefs}, Theor. Ecol., 5 (2012), pp.~105--114.
	
	\bibitem{Fun11}
	{\sc T.~Fung, R.~M.~Seymour, and C.~R.~Johnson}, {\em Alternative stable states and phase shifts in coral reefs under anthropogenic stress}, Ecology, 92 (2011), pp.~967--982.
	
	\bibitem{Harborne17}
	{\sc A.~R.~Harborne, A.~Rogers, Y.~M.~Bozec, and P.~J.~Mumby}, {\em Multiple stressors and the functioning of coral reefs}, Annu. Rev. Mar. Sci., 9 (2017), pp.~445--468.
	
	\bibitem{Hughes84}
	{\sc T.~P.~Hughes}, {\em Population dynamics based on individual size rather than age: a general model with a reef coral example}, Am. Nat., 123 (1984), pp.~778--795.
	
	\bibitem{Hugh94}
	{\sc T.~P.~Hughes}, {\em Catastrophes, phase shifts, and large-scale degradation of a Caribbean coral reef}, Science, 265 (1994), pp.~1547--1551.
	
	\bibitem{Hughes99}
	{\sc T.~P.~Hughes and J.~H.~Connell}, {\em Multiple stressors on coral reefs: a long-term perspective}, Limnol. Oceanogr., 44 (1999), pp.~932--940.
	
	\bibitem{Know92}
	{\sc N.~Knowlton}, {\em Thresholds and multiple stable states in coral reef community dynamics}, Am. Zool., 32 (1992), pp.~674--682.
	
	\bibitem{Lessios84}
	{\sc H.~A.~Lessios, D.~R.~Robertson, and J.~D.~Cubit}, {\em Spread of \emph{Diadema} mass mortality through the Caribbean}, Science, 226 (1984), pp.~335--337.
	
	\bibitem{Li14}
	{\sc X.~Li, H.~Wang, Z.~Zhang, and A.~Hastings}, {\em Mathematical analysis of coral reef models}, J. Math. Anal. Appl., 416 (2014), pp.~352--373.
	
	\bibitem{MoFo99}
	{\sc F.~Moberg and C.~Folke}, {\em Ecological goods and services of coral reef ecosystems}, Ecol. Econ., 29 (1999), pp.~215--233.
	
	\bibitem{Mum07}
	{\sc P.~J.~Mumby, A.~Hastings, and H.~J.~Edwards}, {\em Thresholds and the resilience of Caribbean coral reefs}, Nature, 450 (2007), pp.~98--101.
	
	\bibitem{Pe01}
	{\sc L.~Perko}, {\em Differential Equations and Dynamical Systems}, Springer, New York, 2001.
	
	\bibitem{TanLanWei24}
	{\sc M.~Tan, G.~Lan, and C.~Wei}, {\em Mathematical insights into the influence of delay and external recruitment on coral--macroalgae system}, J. Franklin Inst., 361 (2024), 107329.
	
	\bibitem{Zha92}
	{\sc Z.~Zhang, T.~Ding, W.~Huang, and Z.~Dong}, {\em Qualitative Theory of Differential Equations}, American Mathematical Society, Providence, RI, 1992.
	
	\bibitem{ZhYu22}
	{\sc S.~N.~Zhao and S.~L.~Yuan}, {\em A coral reef benthic system with grazing intensity and immigrated macroalgae in deterministic and stochastic environments}, Math. Biosci. Eng., 19 (2022), pp.~3449--3471.
	
\end{thebibliography}
\end{document}